\newcolumntype{M}[1]{>{\centering\arraybackslash}m{#1}}
\newtheorem{lemma}{Lemma}[section]
\newtheorem{theorem}[lemma]{Theorem}
\theoremstyle{definition}
\newtheorem{remark}[lemma]{Remark}
\newtheorem{assumption}[lemma]{Assumption}
\newtheorem{definition}[lemma]{Definition}
\title[Feedback stabilisation for the Cahn-Hilliard equation]%
{On feedback stabilisation for the Cahn-Hilliard equation and its numerical approximation}
\author{%
H.~Egger$^{1,2}$ \and 
M.~Fritz$^{1}$ \and K.~Kunisch$^{1,3}$ \and 
S.~S.~Rodrigues$^{1}$}
\email{herbert.egger@jku.at} 
\email{marvin.fritz@ricam.oeaw.ac.at}
\email{karl.kunisch@uni-graz.at}
\email{sergio.rodrigues@ricam.oeaw.ac.at}
\address{%
\footnotesize
$^1$Johann Radon Institute for Computational and Applied Mathematics, Linz, Austria \\[1pt]
$^2$Institute for Numerical Mathematics, Johannes Kepler University Linz, Austria\\[-0.3pt]
$^3$Institute for Mathematics and Scientific Computing, 
Karl--Franzens University Graz, Austria}
\begin{document}

\begin{abstract}
We consider the stabilisation of solutions to the Cahn-Hilliard equation towards a given trajectory by means of a finite-dimensional static output feedback mechanism. Exponential stabilisation of the controlled state around the target trajectory is proven using careful energy estimates and a spectral condition which characterizes the strength of the feedback. The analysis is general enough to allow for pointwise and distributed measurements and actuation. The main results are derived via arguments that carry over to appropriate discretisation schemes which allows us to establish corresponding exponential stabilisation results also on the discrete level. The validity of our results and the importance of some of our assumptions are illustrated by numerical tests.
\end{abstract}

\maketitle

\begin{quote}
\footnotesize 
\textbf{Keywords:}
Cahn-Hilliard equation, 
exponential stabilisation, 
finite-dimensional feedback,
trajectory tracking,
numerical approximation of controlled systems
\end{quote}

%%%%%%%%%%%%%%%%%%%%%%%%%%%%%%%%%%%%%%%%%%%%%%%%%%%%%%%%%%%%%%%%%%%%%%%%%%%%%%%%%
\section{Introduction}
\label{sec:intro}
%%%%%%%%%%%%%%%%%%%%%%%%%%%%%%%%%%%%%%%%%%%%%%%%%%%%%%%%%%%%%%%%%%%%%%%%%%%%%%%%%

The Cahn-Hilliard equation is used as one of the fundamental mathematical models for phase separation processes involving conserved quantities, e.g.,
in binary alloys \cite{Cahn1961,CahnHilliard1958,Elliott1989} 
or binary fluids~\cite{Abels2012,Boyer1999,Gurtin96,Hohenberg77}.
Applications include the modeling of polymers \cite{Choksi2009,Zhou2006},
fracture propagation~\cite{Borden2012,Silva2013},
tumor growth~\cite{Fritz2023,Wise2011,Wu2014}, 
image processing~\cite{Bertozzi2007}, 
or topology optimization~\cite{Zhou2007}, 
to mention just a few.
We refer to~\cite{Kim2016,Wu2022,miranville2019cahn} for an overview of basic principles, potential applications, and additional references. 
In this paper, we are interested in the control of solutions to the Cahn-Hilliard equation towards a given target trajectory by feedback control. 

\subsection{Related work}
Using energy estimates, Yong and Zheng~\cite{Yong1991} proved exponential stabilisation of the Cahn-Hilliard equation using a nonlinear distributed feedback mechanism. 
In a seminal work~\cite{AzouaniTiti2014}, Azouani and Titi considered stabilisation of nonlinear dissipative systems, like the one-dimensional Chafee-Infante equation,  by finite-dimensional feedback. Similarly to the results of the present paper, their analysis relies on spectral arguments, energy estimates, and a sufficiently strong feedback operator.
Corresponding computational results were presented by Lunasin and Titi~\cite{LunasinTiti2017}.
Based on solutions of a Riccati operator equation, Barbu et al.~\cite{Barbu2017} established local exponential stabilizability of a nonisothermal Cahn-Hilliard system by a finite-dimensional feedback operator, and in~\cite{AzmiFritzRod24-arx} semiglobal exponential stabilizability was derived using explicit feedback laws with appropriately located actuators.
Gomes~et al.~\cite{Gomes2017}, 
considered the stabilisation of non-trivial solutions of the generalized Kuramoto–Sivashinsky equation using feedback and optimal control techniques. 
Al~Jamal and Morrissiam~\cite{AlJamal2018} studied the linear stability of this equation, and Tomlin and Gomes~\cite{Tomlin2019} investigated the feedback control by point actuators.
Sliding mode control for a phase field system related to tumor growth was studied by Colli et al.~in \cite{Colli2019}. 
Guzman~\cite{Guzman2020} studied the local exact controllability of the 1d Cahn–Hilliard equation to trajectories using Carleman estimates. 
Barros et al.~\cite{Barros2021} used feedback ideas for adaptive time-step control for numerical solvers of a Cahn-Hilliard system with phase-dependent mobility. 
In recent works, Song et al.~\cite{Song2024} and Larios et al.~\cite{Larios2024} investigated data assimilation for the Cahn-Hilliard-Navier-Stokes system and the Kuramato-Sivashinsky equation using feedback ideas. 
A variety of further results exist concerning optimal control for the Cahn-Hilliard equations and related systems. 
Wang~\cite{Wang2011} proposed a semidiscrete algorithm for the distributed control of a Cahn-Hilliard system.
Hinze and Kahle~\cite{Hinze2013} investigated instantaneous control of the Cahn-Hilliard equation, again in a semidiscrete setting.
Zheng and Wang~\cite{Zheng2015} studied optimal control problems for the Cahn-Hilliard equation with state constraints. 
Duan and Zhao~\cite{Duan2015} investigated the optimal control of a viscous Cahn-Hilliard equation.
Hintermüller and co-workers~\cite{Hintermueller2017} studied the optimal control of the Cahn-Hilliard-Navier-Stokes system. 

In this paper, we investigate the exponential stabilizability of solutions to the Cahn-Hilliard equation towards a given target trajectory by means of a finite-dimensional linear static output feedback mechanism. The results are first derived on the continuous level, but they carry over almost verbatim to appropriate discretisations of the problem. 

\subsection{Problem setting}
The system under consideration in this manuscript reads
\begin{alignat}{2}
\partial_t y + \nu \Delta^2 y - \Delta \varphi(y) &= h - \mathcal{F}(y-y_r) \qquad && \text{in } \Omega, \ t>0, \label{eq:ch1} \\
\partial_n y = \partial_n \Delta y &= 0 \qquad && \text{on } \partial\Omega, \ t>0. \label{eq:ch2}
\end{alignat}
Here $y$ is the state, typically a phase fraction, $\nu>0$ a parameter encoding the time and length scales, $\varphi(y)$ the derivative of a double-well potential, responsible for the instabilities of the system, and $\Omega$ the spatial domain. 
Furthermore, $h$ is a given forcing term and $\mathcal{F}$ an appropriate feedback operator. 
The target trajectory $y_r$ is assumed to satisfy 
\begin{alignat}{2}
\partial_t y_r + \nu \Delta^2 y_r - \Delta \varphi(y_r) &= h_r \qquad &&\text{in } \Omega, \ t>0, \label{eq:t1} \\
\partial_n y_r = \partial_n \Delta y_r &= 0 \qquad && \text{on } \partial\Omega, \ t>0, \label{eq:t2}
\end{alignat}
with appropriate right hand side $h_r\in L^2(0,T;L^2({\Omega}))$. 
Note that any smooth function satisfying the boundary conditions \eqref{eq:t2} and, in particular, trajectories or steady states of the uncontrolled system \eqref{eq:ch1}--\eqref{eq:ch2} with $\mathcal{F} \equiv 0$ are suitable target trajectories. 
%

%%%%%%%%%%%%%%%%%%%%%%%%%%%%%%%%%%%%%
\subsection{Notation and main result}
Let $\Omega \subset \mathbb{R}^d$ be a bounded convex domain and define 
\begin{align} \label{eq:HN2}
H_n^2(\Omega) = \{v \in H^2(\Omega) : \partial_n v=0 \ \text{on } \partial\Omega \}
\qquad \text{and} \qquad 
H_n^{-2}(\Omega) := H_n^2(\Omega)'.
\end{align}
We mostly omit indication of the domain and often write $H_n^{2}$ and $H_n^{-2}$ below. 
We further designate by $L^p(0,T;X)$ and $W^{k,p}(0,T;X)$ the Bochner spaces of functions $u : [0,T] \to X$ with the appropriate integrability, respectively, differentiability in time. Explicit indication of the time interval will again be omitted in many instances. 
Our analysis is based on the following assumptions, which are assumed to hold throughout the text. 
\begin{assumption} \label{ass:1}
$\Omega \subset \mathbb{R}^d$, $d \le 3$ is a bounded and convex domain,  $\nu>0$ constant, $\varphi(y) = y^3 - y$, and $y_r \in L^2(0,T;H^4) \cap H^1(0,T;L^2)$ satisfies \eqref{eq:t1}--\eqref{eq:t2} with $\|y_r\|_{L^\infty(W^{1,\infty})} \le R$. The feedback operator $\mathcal{F} : H_n^2(\Omega) \to H_n^{-2}(\Omega)$ is linear and continuous with $\langle \mathcal{F} z,z\rangle \ge 0$.
\end{assumption}
As will become clear from our analysis, the regularity assumptions on the reference trajectory $y_r$, the special form of the nonlinear term $\varphi(y)$, as well as the form of the parameter $\nu$ could be further relaxed. 
By Assumption~\ref{ass:1} we can find $\gamma \in \mathbb{R}$ such that
\begin{align} \label{eq:spectral}
\nu\|\Delta z\|^2_{L^2(\Omega)} + 2 \langle \mathcal{F}(z) ,z\rangle \ge (C^* + \gamma) \|z\|_{L^2(\Omega)}^2 
\qquad \forall z \in H_n^2(\Omega),
\end{align}
holds for the constant $C^* = \frac{3}{2}(R^2 + (3 R^2)^{4/3} \nu^{-1/3}+\nu^{-1}) + 1$ related to the bounds of the target trajectory $y_r$ and the model parameter $\nu$. The specific form of this constant will become clear from our analysis below.
By non-negativity of the left-hand side, any choice $\gamma \le - C^*$ is certainly admissible. 
This \emph{spectral estimate} is the key ingredient in stability analysis and allows us to establish our main result.
\begin{theorem} \label{thm:1}
Let Assumption~\ref{ass:1} hold with $\nu,R>0$. 
Then for any $T>0$ and any choice of $y_0\in L^2(\Omega)$, $h \in L^2(0,T;L^2)$, problem \eqref{eq:ch1}--\eqref{eq:ch2} has a weak solution 
\begin{align*}
y \in L^\infty(0,T;L^2) \cap L^2(0,T;H_n^2) 
\quad \text{with} \quad 
\partial_t y \in L^{4/3}(0,T;H_n^{-2})
\end{align*}
satisfying the initial condition $y(0)=y_0$ and the estimate 
\begin{align} \label{eq:est}
\|y_r(t)-y(t)\|_{L^2}^2 
\le e^{-\gamma t} \|y_r(0) - y(0) \|_{L^2}^2 + \int_0^t e^{-\gamma (t-s)} \|{{h(s) - h_r(s)}}\|_{L^2}^2 ds
\end{align}
for all $0 \le t \le T$ and any $\gamma$ satisfying condition~\eqref{eq:spectral}. 
Solutions thus exist globally in time and, if $\gamma > 0$ and $h=h_r$, they stabilise exponentially around the target trajectory $y_r$. 
\end{theorem}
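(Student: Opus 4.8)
The plan is to pass to the error $w:=y-y_r$, which by subtracting \eqref{eq:t1}--\eqref{eq:t2} from \eqref{eq:ch1}--\eqref{eq:ch2} solves
\begin{align*}
\partial_t w + \nu\Delta^2 w - \Delta\bigl(\varphi(y)-\varphi(y_r)\bigr) = (h-h_r) - \mathcal{F}(w)
\end{align*}
with $\partial_n w=\partial_n\Delta w=0$ and $w(0)=y_0-y_r(0)$, and to establish a differential inequality for $\|w\|_{L^2}^2$ whose Grönwall-integrated form is exactly \eqref{eq:est}. Existence I would obtain from a Galerkin scheme in the eigenbasis $\{e_k\}$ of the Neumann Laplacian; since $\Delta e_k=-\lambda_k e_k$ these also satisfy $\partial_n\Delta e_k=0$, hence lie in $H_n^2$ and simultaneously diagonalise $\Delta^2$ under both boundary conditions. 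The semidiscrete problem is a locally solvable ODE system, and the a priori bound derived below both extends its solutions to all of $[0,T]$ and supplies the compactness needed to pass to the limit.

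The heart of the matter is the energy estimate, which I would carry out rigorously on the Galerkin level and record here formally. Testing the error equation with $w$ gives
\begin{align*}
\tfrac12\tfrac{d}{dt}\|w\|_{L^2}^2 + \nu\|\Delta w\|_{L^2}^2 + \langle\mathcal{F}w,w\rangle - \int_\Omega\bigl(\varphi(y)-\varphi(y_r)\bigr)\Delta w = \langle h-h_r,w\rangle ,
\end{align*}
where I have used $\partial_n w=0$ to integrate $\nu\langle\Delta^2 w,w\rangle=\nu\|\Delta w\|_{L^2}^2$ by parts, and the fact that $\partial_n y=\partial_n y_r=0$ forces $\partial_n(\varphi(y)-\varphi(y_r))=0$, so that Green's identity turns the nonlinear term into an interior integral against $\Delta w$ with no boundary contribution. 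With $y=w+y_r$ and $\varphi(s)=s^3-s$ one has $\varphi(y)-\varphi(y_r)=w^3+3y_r w^2+(3y_r^2-1)w$, and the decisive observation is that the genuinely cubic contribution carries a favourable sign,
\begin{align*}
-\int_\Omega w^3\,\Delta w = 3\int_\Omega w^2|\nabla w|^2 \ge 0 ,
\end{align*}
so it remains on the dissipative side and needs no control; without this sign the term $\|w\|_{L^6}^3\|\Delta w\|_{L^2}$ would be supercritical against $\nu\|\Delta w\|_{L^2}^2+C\|w\|_{L^2}^2$ and the estimate could not close at this level. The remaining quadratic and linear terms $3\int_\Omega y_r w^2\Delta w$ and $\int_\Omega(3y_r^2-1)w\Delta w$ I would bound by Hölder's inequality together with $\|y_r\|_{L^\infty(W^{1,\infty})}\le R$, Gagliardo--Nirenberg interpolation in dimension $d\le3$, and the convex-domain elliptic estimate $\|w\|_{H^2}\lesssim\|w\|_{L^2}+\|\Delta w\|_{L^2}$ --- this is the one place convexity of $\Omega$ is used. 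Young's inequality then moves every factor of $\|\Delta w\|_{L^2}$ into $\nu\|\Delta w\|_{L^2}^2$ at the expense of a multiple of $\|w\|_{L^2}^2$; with the Young exponents optimised this bookkeeping reproduces precisely the constant $C^*$, the fractional power $(3R^2)^{4/3}\nu^{-1/3}$ coming from the borderline quadratic term and the additive $+1$ later absorbing the forcing.

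Combining with the spectral estimate is then mechanical. Doubling the identity and inserting the bound $\nu\|\Delta w\|_{L^2}^2+(C^*-1)\|w\|_{L^2}^2$ for (twice) the nonlinear terms gives
\begin{align*}
\tfrac{d}{dt}\|w\|_{L^2}^2 + 2\nu\|\Delta w\|_{L^2}^2 + 2\langle\mathcal{F}w,w\rangle \le 2\langle h-h_r,w\rangle + \nu\|\Delta w\|_{L^2}^2 + (C^*-1)\|w\|_{L^2}^2 .
\end{align*}
Applying \eqref{eq:spectral} in the form $\nu\|\Delta w\|_{L^2}^2+2\langle\mathcal{F}w,w\rangle\ge(C^*+\gamma)\|w\|_{L^2}^2$ to one copy of $\nu\|\Delta w\|_{L^2}^2$ on the left --- which is exactly why the factor $2$ stands in front of $\mathcal{F}$ there --- and cancelling the second copy against the right, then estimating $2\langle h-h_r,w\rangle\le\|h-h_r\|_{L^2}^2+\|w\|_{L^2}^2$ so that $\|w\|_{L^2}^2+(C^*-1)\|w\|_{L^2}^2=C^*\|w\|_{L^2}^2$ exactly cancels the $C^*\|w\|_{L^2}^2$ from the spectral bound, leaves
\begin{align*}
\tfrac{d}{dt}\|w\|_{L^2}^2 + \gamma\|w\|_{L^2}^2 \le \|h-h_r\|_{L^2}^2 .
\end{align*}
Grönwall's inequality yields \eqref{eq:est}; since no constant depends on $T$ the solution is global, and for $\gamma>0$ and $h=h_r$ the bound gives exponential decay of $\|y_r-y\|_{L^2}$.

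It remains to make existence rigorous by passing to the limit in the Galerkin approximations $y_m$. The estimate above bounds $y_m$ uniformly in $L^\infty(0,T;L^2)\cap L^2(0,T;H_n^2)$; testing against $v\in H_n^2$ and using $\langle\Delta\varphi(y_m),v\rangle=\int_\Omega\varphi(y_m)\Delta v$ then bounds $\partial_t y_m$ in $L^{4/3}(0,T;H_n^{-2})$, the exponent $4/3$ being dictated by the cubic term through $\|\varphi(y_m)\|_{L^2}\lesssim 1+\|y_m\|_{H^2}^{3/2}\in L^{4/3}(0,T)$ since $\|y_m\|_{H^2}\in L^2(0,T)$. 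By the Aubin--Lions lemma a subsequence converges strongly in $L^2(0,T;L^2)$, which, with the uniform higher integrability furnished by the energy bound, identifies the nonlinear limit $y_m^3\to y^3$; the linear diffusion and the feedback $\mathcal{F}$ pass to the limit by weak convergence, and the initial datum is attained through the continuity of $y$ into $H_n^{-2}$. I expect the genuine obstacle to be precisely the nonlinear estimate of the second paragraph: securing absorption into $\nu\|\Delta w\|_{L^2}^2$ with the sharp constant $C^*$ hinges on the sign of the cubic term and on sharp interpolation, whereas everything else is routine for dissipative fourth-order equations.
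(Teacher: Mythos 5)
Your overall architecture coincides with the paper's (Galerkin approximation for the error equation, an $L^2$ energy estimate closed by the spectral condition \eqref{eq:spectral}, Gr\"onwall, and Aubin--Lions to pass to the limit), and your bookkeeping of how \eqref{eq:spectral} combines with the energy identity to produce $\frac{d}{dt}\|w\|_{L^2}^2+\gamma\|w\|_{L^2}^2\le\|h-h_r\|_{L^2}^2$ is correct. The gap is in the treatment of the mixed quadratic term $3\int_\Omega y_r w^2\,\Delta w$. You claim that H\"older, Gagliardo--Nirenberg and Young turn it into $\varepsilon\|\Delta w\|_{L^2}^2$ plus \emph{a multiple of} $\|w\|_{L^2}^2$. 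That is not true: H\"older gives $3R\|w\|_{L^4}^2\|\Delta w\|_{L^2}$, and interpolating $\|w\|_{L^4}^2\lesssim\|w\|_{L^2}^{2-d/4}\|w\|_{H^2}^{d/4}$ leaves, after Young with the exponent pair that isolates $\|\Delta w\|_{L^2}^2$, a term of order $\|w\|_{L^2}^{10}$ in $d=3$ (and $\|w\|_{L^2}^{6}$, $\|w\|_{L^2}^{14/3}$ in $d=2,1$) --- superquadratic in every dimension. The term $w^2\Delta w$ is genuinely cubic, so no interpolation against the single quadratic dissipation $\nu\|\Delta w\|_{L^2}^2$ can reduce it to $C\|w\|_{L^2}^2$; with your bookkeeping the differential inequality is not linear in $\|w\|_{L^2}^2$, Gr\"onwall does not yield \eqref{eq:est}, and the constant $C^*$ is not reproduced (in the paper the exponent $(3R^2)^{4/3}\nu^{-1/3}$ comes from the term $3R^2\|w\|_{L^2}^{3/2}\|\Delta w\|_{L^2}^{1/2}$, which only appears after a further integration by parts, not from $3\int y_rw^2\Delta w$ estimated directly).

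The repair is to use the positive cubic term you correctly identified, rather than merely discarding it. Integrating \emph{all} nonlinear contributions by parts to the gradient level, the dangerous piece is $-\varphi'(y)|\nabla w|^2=-3(y_r+w)^2|\nabla w|^2+|\nabla w|^2$, whose first summand has a sign strong enough to absorb \emph{exactly} the cubic gradient interactions generated when the cross term $(\varphi'(y_r)-\varphi'(y))\nabla y_r\cdot\nabla w$ is split by Young's inequality (equivalently: $3\int w^2|\nabla w|^2-6\int y_rw|\nabla w|^2\ge-3\int y_r^2|\nabla w|^2$ after you integrate the $3y_rw^2$ term by parts as well). What survives is only $\tfrac34|\nabla y_r|^2|w|^2$, $3|y_r||\nabla y_r||w||\nabla w|$ and $|\nabla w|^2$, each of which is quadratic in $w$ and closes via $\|\nabla w\|_{L^2}^2\le\|w\|_{L^2}\|\Delta w\|_{L^2}$ and Young, yielding the stated $C^*$. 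This absorption step --- not the interpolation --- is the actual crux of the proof, and it is missing from your argument as written.
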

\begin{remark} \label{rem:1}
Let us note that condition~\eqref{eq:spectral} is valid for $\mathcal{F} \equiv 0$ and $\gamma=-C^*$, which results in the global existence of a solution to the uncontrolled system. 
As we show in Section~\ref{sec:feedback}, condition \eqref{eq:spectral} can be satisfied for any desired $\gamma > 0$ if the feedback is chosen sufficiently strong in order to, roughly speaking, control the low-frequency components of the spectrum of the bi-Laplacian operator.  
We further emphasise that condition \eqref{eq:spectral} does not involve the solution $y$ of the system. The inequality \eqref{eq:est} thus yields a global stability estimate.
\end{remark}

\subsection{Outline of the manuscript}
A detailed proof of Theorem~\ref{thm:1} is given in Section~\ref{sec:proof}. 
It relies on careful energy estimates for the nonlinear system that describe the difference between controlled state $y$ and the target trajectory $y_r$. 
In Section~\ref{sec:feedback}, we construct explicit finite-dimensional feedback operators that satisfy \eqref{eq:spectral} with $\gamma>0$. This analysis is strongly inspired by \cite{AzouaniTiti2014,KunRodWal21}.
Due to the chosen functional analytic setting, we can allow for pointwise actuators and measurements. 
In Section~\ref{sec:disc}, we show that the stability results of Theorem~\ref{thm:1} carry over almost verbatim to appropriate discretisations of the problem. In particular, we obtain uniform stabilisation for discrete solutions independent of the discretisation parameters.
In Section~\ref{sec:num}, we present some numerical tests to illustrate the validity of our theoretical results and the importance of our assumptions.

%%%%%%%%%%%%%%%%%%%%%%%%%%%%%%%%%%%%%%%%%%%%%%%%%%%%%%%%%%%%%%%%%%%%%%%%%%%%%%%%%
\section{Proof of Theorem~~\ref{thm:1}}
\label{sec:proof}
%%%%%%%%%%%%%%%%%%%%%%%%%%%%%%%%%%%%%%%%%%%%%%%%%%%%%%%%%%%%%%%%%%%%%%%%%%%%%%%%%

The difference $z=y-y_r$ of solutions to \eqref{eq:ch1}--\eqref{eq:ch2} and \eqref{eq:t1}--\eqref{eq:t2} formally satisfies
\begin{alignat}{2}
\partial_t z + \nu \Delta^2 z - \Delta \varphi_r(z) &= g_r - \mathcal{F} z \qquad &&\text{in } \Omega, \ t>0, \label{eq:dif1}\\
\partial_n z = \partial_n \Delta z &= 0 \qquad && \text{in } \partial\Omega, \ t>0. \label{eq:dif2}
\end{alignat}
For ease of notation, we introduced the abbreviations $\varphi_r(z) = \varphi(y) - \varphi(y_r)$ and $g_r = h - h_r$. 
In the following, we prove the existence of a unique solution to this problem by Galerkin approximation~\cite{roubicek,wloka}.
Setting $y=y_r+z$, we then obtain a weak solution $y$ to \eqref{eq:ch1}--\eqref{eq:ch2}, 
and appropriate estimates for $z$ lead to \eqref{eq:est} and thus complete the proof of the theorem. 

\subsection{Weak formulation}
By testing \eqref{eq:dif1} appropriately, integration-by-parts, and the boundary conditions \eqref{eq:dif2}, any sufficiently smooth solution of \eqref{eq:dif1}--\eqref{eq:dif2} can be seen to solve 
\begin{alignat}{2} \label{eq:var}
\langle \partial_t z,v\rangle + \nu \langle \Delta z, \Delta v\rangle + \langle \mathcal{F} z, v\rangle &= \langle g_r ,v\rangle + \langle \varphi_r(z),\Delta v\rangle,
\end{alignat}
for all $v \in H_n^2$ and all $t \ge 0$ of relevance. 
Vice versa, any sufficiently smooth function $z$ satisfying \eqref{eq:var} for all $v \in H_n^2$  and $t \ge 0$ is a solution of \eqref{eq:dif1}--\eqref{eq:dif2}. This motivates
\begin{definition}
A function $z \in L^2(0,T;H_n^2) \cap L^\infty(0,T;L^2)$ with $\partial_t z \in L^{4/3}(0,T;H_n^{-2})$ that satisfies \eqref{eq:var} for all $v \in H_n^2$ and a.a. $0 \le t \le T$ is called a weak solution of \eqref{eq:dif1}--\eqref{eq:dif2}.
\end{definition}
% We define $V=H_n^2(\Omega)$ and $H=L^2(\Omega)$. 
%
The reduced integrability of the time derivative is due to the nonlinearity of the equation, which will become clear from the estimates given below.

\subsection{Galerkin approximation}

From compactness of the resolvent of $\Delta^2$, one can see that there exists a countable set $(\alpha_k,z_k)$, $k \in \mathbb{N}$, of solutions to 
\begin{align}
\langle \Delta z,\Delta v\rangle = \alpha \langle z,v\rangle  \qquad \forall v \in H_n^2.
\end{align}
The eigenvalues $\alpha_k \ge 0$ can be sorted in increasing order and the eigenfunctions can be chosen such that $\|\Delta z_k\|^2_{L^2} + \|z_k\|_{L^2}^2=1$ and $\langle \Delta z_k,\Delta z_m\rangle = \langle z_k,z_m\rangle=0$ for $m \ne k$. 
We define $V_N = \operatorname{span}\{z_k : k \le N\}$ and note that $V_N \subset V_M$ for $N \le M$ and $\overline{\bigcup_{N \in \mathbb{N}} V_N} = H_n^2$. 
The Galerkin approximation of \eqref{eq:var} reads
\begin{alignat}{2} \label{eq:varN}
\langle \partial_t z_N,v_N\rangle + \nu \langle \Delta z_N, \Delta v_N\rangle + \langle \mathcal{F} z_N, v_N\rangle &= \langle g_r ,v_N\rangle + \langle \varphi_r(z_N),\Delta v_N\rangle 
\end{alignat}
for all $v_N \in V_N$ and all $t \ge 0$ of interest. Note that the test function $v_N$ is independent of time. To fix the solution, we further require 
\begin{align} \label{eq:initN}
\langle z_N(0),v_N\rangle &= \langle z_0,v_N\rangle \qquad \forall v_N \in V_N,
\end{align}
with appropriate initial value $z_0 \in L^2(\Omega)$ given.
We then have the following result.
\begin{lemma}\label{lem:galerkin}
Let Assumption~\ref{ass:1} be valid. Then for any $z_{0} \in L^2(\Omega)$ and $g_r \in L^2(0,T;L^2)$ there exists a unique solution $z_N \in H^1(0,T;V_N)$ of \eqref{eq:varN}--\eqref{eq:initN} and 
\begin{align*}
\|z_N\|_{L^\infty(L^2)} + \|\Delta z_N\|_{L^2(L^2)} + \|\partial_t z_N\|_{L^{4/3}(H_n^{-2})} \le C_T (\|z_0\|_{L^2} + \|g_r\|_{L^2(L^2)}),
\end{align*}
with a constant $C_T$ that is independent of the approximation index $N$ and the data $z_0$, $g_r$. 
Further, with $\gamma$ from the spectral condition~\eqref{eq:spectral}, there holds
\begin{align} \label{eq:estN}
\|z_N(t)\|_{L^2}^2 &\le e^{-\gamma t} \|z_0\|^2_{L^2} + \int_0^t e^{-\gamma (t-s)} \|g_r\|_{L^2}^2 \, ds .
\end{align}
\end{lemma}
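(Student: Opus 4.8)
The plan is to reduce \eqref{eq:varN}--\eqref{eq:initN} to a finite-dimensional system of ordinary differential equations for the coefficients in $z_N=\sum_{k\le N}c_k(t)z_k$ and to obtain global existence from $N$-independent energy bounds. Inserting this ansatz and testing against each $z_m$, the orthonormality relations $\langle\Delta z_k,\Delta z_m\rangle=\langle z_k,z_m\rangle=0$ for $k\ne m$ make the mass matrix diagonal, so the system takes the explicit form $\dot c=F(t,c)$ with $F$ polynomial in $c$ (through $\varphi_r$) and only $L^2$ in $t$ (through $g_r$). As $F$ is locally Lipschitz in $c$ and of Carathéodory type in $t$, the Picard--Lindelöf/Carathéodory theorem yields a unique, absolutely continuous local solution on a maximal interval; the a priori bounds derived below rule out finite-time blow-up and thus extend the solution to all of $[0,T]$ with $z_N\in H^1(0,T;V_N)$, giving existence and uniqueness.

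For the estimates I would test \eqref{eq:varN} with the admissible function $v_N=z_N$, which gives the energy identity
\[
\tfrac12\tfrac{d}{dt}\|z_N\|_{L^2}^2+\nu\|\Delta z_N\|_{L^2}^2+\langle\mathcal F z_N,z_N\rangle=\langle g_r,z_N\rangle+\langle\varphi_r(z_N),\Delta z_N\rangle .
\]
The crux is the nonlinear term. Writing $\varphi_r(z_N)=z_N^3-z_N+3y_r z_N^2+3y_r^2z_N$ and integrating by parts (all boundary terms vanish since $\partial_n z_N=0$), one sees that the cubic and the remaining convex contributions combine into the sign-definite quantity $-3\int(y_r+z_N)^2|\nabla z_N|^2\le0$. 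I would use this \emph{good} term to absorb the indefinite cross terms generated by the spatial variation $\nabla y_r$ of the reference, and control the genuinely lower-order remainder by Young's inequality together with the interpolation $\|\nabla z_N\|_{L^2}\le\|z_N\|_{L^2}^{1/2}\|\Delta z_N\|_{L^2}^{1/2}$ and a Gagliardo--Nirenberg estimate valid for $d\le3$. Optimising the weights so that the resulting $\|\Delta z_N\|_{L^2}^2$ parts are absorbed into $\tfrac\nu2\|\Delta z_N\|_{L^2}^2$ produces a bound of the form $\langle\varphi_r(z_N),\Delta z_N\rangle\le\tfrac\nu2\|\Delta z_N\|_{L^2}^2+\tfrac{C^*-1}{2}\|z_N\|_{L^2}^2$ with precisely the constant $C^*$ of \eqref{eq:spectral}; in particular the fractional scaling $(3R^2)^{4/3}\nu^{-1/3}$ and the exponent $4/3$ arise from the weighted Young inequality with conjugate exponents $4$ and $4/3$ applied to a term of type $R^2\|z_N\|_{L^2}^{3/2}\|\Delta z_N\|_{L^2}^{1/2}$.

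Combining this with $\langle g_r,z_N\rangle\le\tfrac12\|g_r\|_{L^2}^2+\tfrac12\|z_N\|_{L^2}^2$ turns the identity into $\tfrac{d}{dt}\|z_N\|_{L^2}^2+\nu\|\Delta z_N\|_{L^2}^2+2\langle\mathcal F z_N,z_N\rangle\le C^*\|z_N\|_{L^2}^2+\|g_r\|_{L^2}^2$, and invoking the spectral estimate \eqref{eq:spectral} on the second and third terms yields the \emph{linear} differential inequality
\[
\tfrac{d}{dt}\|z_N\|_{L^2}^2+\gamma\|z_N\|_{L^2}^2\le\|g_r\|_{L^2}^2 .
\]
Multiplying by $e^{\gamma t}$ and integrating gives exactly \eqref{eq:estN}, hence the $L^\infty(L^2)$ bound; integrating instead the inequality retaining the dissipation $\nu\|\Delta z_N\|_{L^2}^2$ (and using $\langle\mathcal F z_N,z_N\rangle\ge0$ together with the just-obtained $L^\infty(L^2)$ bound) delivers the $L^2(H_n^2)$ bound with an $N$-independent constant $C_T$. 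Since \eqref{eq:spectral} involves no dependence on $N$, all constants are uniform, which also justifies the global-in-time extension claimed above.

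Finally, the bound on $\partial_t z_N$ follows from the equation tested against $v\in H_n^2$: the only delicate contribution is $\langle\varphi_r(z_N),\Delta v\rangle$, which requires $\|\varphi_r(z_N)\|_{L^2}\lesssim\|z_N\|_{L^6}^3+\|z_N\|_{L^2}$; by $H^2\hookrightarrow L^6$ and interpolation the cubic part is controlled by $\|z_N\|_{H_n^2}^{3/2}\|z_N\|_{L^2}^{3/2}$, so the already established $L^\infty(L^2)\cap L^2(H_n^2)$ bounds give $\|\varphi_r(z_N)\|_{L^2}\in L^{4/3}(0,T)$, which is exactly the source of the reduced exponent $4/3$; the remaining terms lie in $L^2(H_n^{-2})\subset L^{4/3}(H_n^{-2})$. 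I expect the main obstacle to be the nonlinear estimate, where one must use the sign of the cubic term to tame the super-quadratic cross terms produced by $\nabla y_r$ without leaving any residual growing faster than $\|z_N\|_{L^2}^2$ — only a genuinely linear differential inequality yields the global, non-blow-up bound \eqref{eq:estN} — and where landing precisely on the constant $C^*$ within the $\tfrac\nu2\|\Delta z_N\|_{L^2}^2$ budget is the delicate point.
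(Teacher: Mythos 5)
Your proposal is correct and follows essentially the same route as the paper: the same energy identity from testing with $v_N=z_N$, the same use of the sign-definite term $-3(y_r+z_N)^2|\nabla z_N|^2$ to absorb the cross terms generated by $\nabla y_r$, the same weighted Young inequality with exponents $4$ and $4/3$ producing the constant $C^*$, Gr\"onwall combined with the spectral condition \eqref{eq:spectral} for \eqref{eq:estN}, and the same Gagliardo--Nirenberg interpolation giving the $L^{4/3}(H_n^{-2})$ bound on $\partial_t z_N$. The only imprecision is in the last step: \eqref{eq:varN} holds only for $v_N\in V_N$, so bounding the dual norm of $\partial_t z_N$ requires reducing the supremum over all of $H_n^2$ to one over $V_N$, which works here because the eigenfunction basis is orthogonal in both $L^2$ and the $\Delta$-inner product --- a point the paper makes explicit and you should too.
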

\begin{proof}
Since $V_N$ is finite-dimensional, the system \eqref{eq:varN}--\eqref{eq:initN} amounts to an initial value problem in $\mathbb{R}^N$ and the local existence of a unique solution follows readily from the Cauchy--Lipschitz theorem. 
We now establish the corresponding estimate. 
Choosing the test function as $v_N=z_N(t)$ in \eqref{eq:varN} yields
\begin{align}  \label{eq:enest}
\frac{d}{dt} \frac{1}{2}\| z_N\|^2_{L^2}
+ \nu \|\Delta z_N\|^2_{L^2} +  \langle \mathcal{F} z_N, z_N \rangle
= \langle g_r, z_N\rangle - \langle \nabla \varphi_r(z_N), \nabla z_N\rangle.
\end{align}
The integrand in the last term can be expressed as
\begin{align*}
-\nabla \varphi_r(z_N) \cdot \nabla z_N 
&= \varphi'(y_r) \nabla y_r \cdot \nabla z_N-\varphi'(y_r+z_N) \nabla (y_r+z_N) \cdot \nabla z_N  \\
&= (\varphi'(y_r)-\varphi'(y_r+z_{{N}}) ) \nabla y_r \cdot \nabla z_N - \varphi'(y_r+z_N) |\nabla z_N|^2.
\end{align*}
Using that $\varphi(y)={y^3-y}$, we further see that $\varphi'(y_r+z_N) = 3 (y_r+z_N)^2-1$ and hence
\begin{align*}
\varphi'(y_r)-\varphi'(y_r+z_N)  
=3 y_r^2-3 (y_r+z_N)^2=-3z_N(z_N+2y_r)= -3 {z_N} (z_N+y_r) - 3z_N  y_r. 
\end{align*}
By using Young's inequality, we may further estimate 
\begin{align} 
&-\nabla \varphi_r({z_N}) \cdot \nabla z_N  \notag \\
&=-3z_N(z_N+y_r)  \nabla y_r \cdot \nabla z_N-3z_Ny_r  \nabla y_r \cdot \nabla z_N - (3(y_r+z_N)^2-1)|\nabla z_N|^2 \notag
\\
&\leq 3(z_N+y_r)^2  |\nabla z_N|^2+\frac{3}{4} |z_N|^2  |\nabla y_r|^2 -3z_Ny_r  \nabla y_r \cdot \nabla z_N - (3(y_r+z_N)^2-1)|\nabla z_N|^2 \notag
\\
&\leq \frac{3}{4} |\nabla y_r|^2 |z_N|^2 + 3|y_r|\, |\nabla y_r|\, |z_N|\, |\nabla z_N| + {|\nabla z_N|^2}. \label{Eq:EstimatePhi}
\end{align}
We then integrate this expression over $\Omega$, use Hölder inequalities, the bounds for $y_r$, see Assumption \ref{ass:1}, and the fact that $\|\nabla z\|_{L^2}^2 \le \|z\|_{L^2} \|\Delta z\|_{L^2}$ for all $z \in H_n^2$. This leads to
\begin{align*}
-\langle \nabla \varphi_r(z_N),  \nabla z_N \rangle
% &\le \frac{3}{4} R^2 \|z_N\|_{L^2}^2+ 3 R (R+1) \|z_N\|_{L^2}^{3/2} \|\Delta z_N\|_{L^2}^{1/2}+{\|z_N\|_{L^2}\|\Delta z_N\|_{L^2}} \\
% &\le \frac{3}{4}( R^2 + (3 R(R+1))^{4/3} \nu^{-1/3}+{\nu^{-1}}) \|z_N\|_{L^2}^2 + \frac{\nu}{{2}} \|\Delta z_N\|^2_{L^2}.
&\le \frac{3}{4} R^2 \|z_N\|_{L^2}^2+ 3 R^2 \|z_N\|_{L^2}^{3/2} \|\Delta z_N\|_{L^2}^{1/2}+{\|z_N\|_{L^2}\|\Delta z_N\|_{L^2}} \\
&\le \frac{3}{4}( R^2 + (3 R^2)^{4/3} \nu^{-1/3}+{\nu^{-1}}) \|z_N\|_{L^2}^2 + \frac{\nu}{{2}} \|\Delta z_N\|^2_{L^2}.
\end{align*}
Plugging this into \eqref{eq:enest} and estimating the remaining term $\langle g_r,z_N\rangle$ in \eqref{eq:enest}  yields  
\begin{align}\label{eq:spectral-Gal}
\frac{d}{dt} \frac{1}{2}\| z_N\|^2_{L^2}
+ \frac{\nu}{2} \|\Delta z_N\|^2_{L^2} +  \langle \mathcal{F} z_N, z_N \rangle
\le \frac{1}{2} C^* \|z_N\|_{L^2}^2 + \frac{1}{2} \|g_r\|_{L^2}^2,
\end{align}
with the constant $C^*$ as announced below condition~\eqref{eq:spectral}.
Using this estimate and applying Grönwall's inequality, we immediately obtain \eqref{eq:estN}.
By dropping the term $\langle \mathcal{F} z_N,z_N \rangle \ge 0$ and again applying Grönwall's inequality, we obtain the uniform bounds for $\|z_N\|_{L^\infty(L^2)}$ and $\|\Delta z_N\|_{L^2(L^2)}$. 
By definition of the dual norm, we have 
\begin{align} \label{eq:dual}
\|\partial_t z_N\|_{H_n^{-2}} 
&= \sup_{v \in H_n^2} \frac{|\langle \partial_t z_N,v\rangle|}{\|v\|_{H^2}} = \sup_{v_N \in V_N} \frac{|\langle \partial_t z_N,v_N\rangle|}{\|v_N\|_{H^2}}. 
\end{align}
In the last step, we used the orthogonality of the eigenfunctions spanning $V_N$. 
For the numerator in the last expression, we can use \eqref{eq:varN}, which yields 
\begin{equation} \label{Eq:ImprovableTimeReg}\begin{aligned}
\langle \partial_t z_N, v_N \rangle 
&= -\nu \langle \Delta z_N, v_N \rangle - \langle \mathcal{F}(z_N),v_N\rangle + \langle  \varphi_r(z_N),\Delta v_N\rangle + \langle  g_r,z_N \rangle \\
&\le C (1+ \|z_N\|_{H^2} + \|z_N\|_{L^2}^{3/2} \|{z_N}\|_{H^2}^{3/2} + \|g_r\|_{L^2}) \|v_N\|_{H^2},
\end{aligned}\end{equation}
where we used the Hölder and Gagliardo-Nirenberg inequalities, see \cite[Theorem 1.24]{roubicek}, to bound $\|\varphi_r(z_N)\|_{L^2} \le C' (1+\|z_N\|_{L^6}^3)$ and $\|z_N\|_{L^6} \le C''\|z_N\|_{L^2}^{1/2} \|z_N\|_{H^2}^{1/2}$.
Note that the constants $C$, $C'$, $C''$ in these estimates are independent of $N$.  Inserting in \eqref{eq:dual} yields the remaining bound for the time derivative. 
\end{proof}

\subsection{Passage to the limit}
The Galerkin approximations $z_N$ obtained in Lemma~\ref{lem:galerkin} are uniformly bounded. We may thus extract a subsequence (again denoted by $z_N$) with limit $z \in L^\infty(0,T;L^2) \cap L^2(0,T;H_n^2) \cap W^{1,4/3}(0,T;H_n^{-2})$ such that 
\begin{alignat*}{5}
z_N &\rightharpoonup z \qquad && \text{in } L^2(0,T;H_n^2), \\
\partial_t z_N &\rightharpoonup \partial_t z \qquad && \text{in } L^{4/3}(0,T;H_n^{-2}), \\
z_N &\rightharpoonup^* z \qquad && \text{in } L^\infty(0,T;L^2),
\intertext{with $N \to \infty$. 
By the Aubin-Lions compactness lemma, we further observe that}
z_N &\to z \qquad && \text{strongly in } L^4(0,T;L^3).
\end{alignat*}
To see this, we apply \cite[Lemma~7.8]{roubicek} with $V_1=H_n^2$, $V_2=H^1$, $V_3=H_n^{-2}$, $p=2$, $q=4/3$, $V_4=L^3$, $H=L^2$, $\lambda=1/2$, which shows that the space
$$ 
(W^{1,2,4/3}(0,T;H_n^2,H_n^{-2}) = \{v \in L^2(0,T;H_n^2) : \partial_t v \in L^{4/3}(0,T;H_n^{-2})\}) \cap L^\infty(0,T;L^2)
$$
compactly embeds into $L^{4}(0,T;L^3)$.
Since $\varphi_r$ involves at most cubic nonlinearities, we can deduce that $\varphi_r(z_N) \to \varphi_r(z)$ in $L^1(0,T;L^1)$. 
We can then pass to the limit in all terms of the weak formulation \eqref{eq:varN} and see that the limit $z$ satisfies \eqref{eq:var}. 
By the continuity of the temporal traces in~$H_n^{-2}$, we further obtain $z(0)=z_0$. 
We thus obtain the following result.
\begin{lemma}
Let the assumptions of Theorem~\ref{thm:1} be valid. 
Then for any $z_0 \in L^2$ and $g_r \in L^2(0,T;L^2)$, 
problem
\eqref{eq:dif1}--\eqref{eq:dif2} has a global weak solution with  $z(0)=z_0$ and
\begin{align} \label{eq:est_dif}
\|z(t)\|_{L^2}^2 
&\le e^{-\gamma t} \|z_0\|^2_{L^2} + \int_0^t e^{-\gamma (t-s)} \|{{g_r}}\|_{L^2}^2 \, ds .
\end{align}
\end{lemma}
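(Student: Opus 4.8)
The plan is to obtain the solution as a limit of the Galerkin approximations $z_N$ from Lemma~\ref{lem:galerkin}, whose a priori bounds are uniform in $N$. First I would invoke the Banach--Alaoglu theorem: since $z_N$ is bounded in the reflexive spaces $L^2(0,T;H_n^2)$ and $W^{1,4/3}(0,T;H_n^{-2})$ and in $L^\infty(0,T;L^2)$ (the dual of $L^1(0,T;L^2)$), I can extract a single subsequence (not relabelled) converging weakly in the first two spaces and weak-$*$ in the third to a limit $z$ lying in the intersection of all three. Linearity then lets me pass to the limit in the terms $\langle \partial_t z_N, v\rangle$, $\nu\langle \Delta z_N,\Delta v\rangle$, $\langle \mathcal F z_N, v\rangle$ and $\langle g_r, v\rangle$ of \eqref{eq:varN}, after testing with functions of the form $v\,\psi(t)$ with $v \in V_M$ fixed and $\psi \in C^\infty([0,T])$, letting $N\to\infty$ for fixed $M$; the density $\overline{\bigcup_M V_M}=H_n^2$ then yields the identity for all $v \in H_n^2$.

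Second --- and this is the crux --- I must pass to the limit in the nonlinear term $\langle \varphi_r(z_N),\Delta v\rangle$, where weak convergence alone is insufficient because $\varphi_r$ is cubic. The remedy is a compactness argument: by Aubin--Lions (here \cite[Lemma~7.8]{roubicek} with the interpolation $H_n^2 \hookrightarrow H^1 \hookrightarrow H_n^{-2}$ and the exponents recorded above), the uniform bounds place $z_N$ in a set relatively compact in $L^4(0,T;L^3)$, so a further subsequence converges strongly there and, after extracting once more, almost everywhere on $(0,T)\times\Omega$. Continuity of $\varphi_r$ then gives $\varphi_r(z_N)\to\varphi_r(z)$ almost everywhere. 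To make this usable against $\Delta v$, I would note that $\|\varphi_r(z_N)\|_{L^2} \lesssim 1 + \|z_N\|_{L^6}^3 \lesssim 1 + \|z_N\|_{L^2}^{3/2}\|z_N\|_{H^2}^{3/2}$, so $\varphi_r(z_N)$ is bounded in $L^{4/3}(0,T;L^2)$; the almost-everywhere limit must coincide with the weak limit, giving $\varphi_r(z_N)\rightharpoonup\varphi_r(z)$ in $L^{4/3}(0,T;L^2)$. Since $\Delta v\,\psi \in L^{4}(0,T;L^2)$ lies in the dual space, the nonlinear term converges as required.

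Having identified $z$ as a weak solution, I would recover the initial condition. The bounds $\partial_t z \in L^{4/3}(0,T;H_n^{-2})$ and $z \in L^2(0,T;H_n^2)$ give continuity of the temporal trace in $H_n^{-2}$; testing \eqref{eq:varN} against $v\,\psi$ with $\psi(T)=0$, $\psi(0)=1$ and integrating by parts in time lets me compare $z_N(0)$ with $z(0)$ and, using \eqref{eq:initN} together with the density of $\bigcup_M V_M$, conclude $z(0)=z_0$.

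Finally, the estimate \eqref{eq:est_dif} follows from the discrete estimate \eqref{eq:estN} by weak lower semicontinuity. Strong convergence in $L^4(0,T;L^3)\subset L^2(0,T;L^2)$ yields, along a subsequence, $z_N(t)\to z(t)$ in $L^2(\Omega)$ for almost every $t$, so $\|z(t)\|_{L^2}^2 = \lim_N \|z_N(t)\|_{L^2}^2$; since the right-hand side of \eqref{eq:estN} is independent of $N$, passing to the limit gives \eqref{eq:est_dif} for a.e. $t$, and then for all $t$ by continuity. I expect the genuinely delicate step to be the nonlinear passage to the limit --- securing strong convergence in a space strong enough to control the cubic term while remaining compatible with the time-integrability $L^{4/3}$ dictated by the a priori bounds --- the remaining steps being standard once the uniform estimates of Lemma~\ref{lem:galerkin} are in hand.
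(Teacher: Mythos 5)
Your proposal is correct and follows essentially the same route as the paper: uniform bounds from Lemma~\ref{lem:galerkin}, weak/weak-$*$ extraction, Aubin--Lions compactness in $L^4(0,T;L^3)$ to handle the cubic term, trace continuity for the initial datum, and passage to the limit in \eqref{eq:estN}. Your treatment of the nonlinear term (a.e.\ convergence plus the uniform $L^{4/3}(0,T;L^2)$ bound, so that the pairing with $\Delta v\in L^2(\Omega)$ is justified) is in fact slightly more careful than the paper's one-line appeal to convergence in $L^1(0,T;L^1)$, but it is the same argument in substance.
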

\begin{proof}
The existence of a weak solution already follows from the arguments outlined above. Furthermore, the estimate \eqref{eq:estN} transfers to the limit $z = \lim_N z_N$ by continuity.    
\end{proof}

\subsection{Proof of Theorem~\ref{thm:1}.}
Let $z_0=y_0-y_r(0)$ and $z$ be a weak solution of \eqref{eq:dif1}--\eqref{eq:dif2}
with $z(0)=z_0$.
Then by construction, the function $y=y_r+z$ is a weak solution of problem~\eqref{eq:ch1}--\eqref{eq:ch2} with $y(0)=y_0$.
The estimate \eqref{eq:est} for this solution follows from \eqref{eq:est_dif}. 
\qed

\subsection{Uniqueness} \label{subsec:Unique}
For completeness of the presentation, let us make some remarks concerning the uniqueness of the solution. We begin with a weak-strong uniqueness result. 
\begin{theorem} \label{thm:unique}
Let $y_1,y_2$ denote two weak solutions of the problem \eqref{eq:ch1}--\eqref{eq:ch2} with the same initial values. 
In addition, assume that $y_1,y_2 \in H^1(0,T;H_n^{-2})$ and 
that $y_2 \in L^4(0,T;W^{1,d})$ where $1 \le d \le 3$ is the spatial dimension.
Then $y_1 = y_2$. 
\end{theorem}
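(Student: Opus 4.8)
The plan is to prove weak-strong uniqueness via an energy estimate on the difference $w = y_1 - y_2$. Since both solutions share the same initial data and forcing (the feedback $\mathcal{F}$ and right-hand side $h$ are identical), the difference satisfies a variational equation analogous to \eqref{eq:var}: for all $v \in H_n^2$ and a.a.\ $t$,
\begin{align*}
\langle \partial_t w, v \rangle + \nu \langle \Delta w, \Delta v \rangle + \langle \mathcal{F} w, v \rangle = \langle \varphi(y_1) - \varphi(y_2), \Delta v \rangle.
\end{align*}
The regularity hypotheses $y_1, y_2 \in H^1(0,T;H_n^{-2})$ together with the weak solution class $L^2(0,T;H_n^2)$ ensure that $w$ is an admissible test function and that the pairing $\langle \partial_t w, w \rangle = \tfrac{1}{2} \tfrac{d}{dt} \|w\|_{L^2}^2$ is justified by the standard Lions-type lemma on continuity of $t \mapsto \|w(t)\|_{L^2}^2$. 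First I would test with $v = w(t)$, giving
\begin{align*}
\frac{d}{dt} \frac{1}{2} \|w\|_{L^2}^2 + \nu \|\Delta w\|_{L^2}^2 + \langle \mathcal{F} w, w \rangle = \langle \varphi(y_1) - \varphi(y_2), \Delta w \rangle,
\end{align*}
and since $\langle \mathcal{F} w, w\rangle \ge 0$ by Assumption~\ref{ass:1}, that term can simply be dropped.

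Next I would integrate the nonlinear term by parts in space to shift the Laplacian off $w$, writing $\langle \varphi(y_1) - \varphi(y_2), \Delta w \rangle = -\langle \nabla(\varphi(y_1) - \varphi(y_2)), \nabla w \rangle$. With $\varphi(y) = y^3 - y$, the difference factors as $\varphi(y_1) - \varphi(y_2) = (y_1^2 + y_1 y_2 + y_2^2 - 1) w$, so its gradient produces terms of the form $(\text{quadratic in } y_i)\nabla w$ and $(y_i \nabla y_i) w$. The crucial structural point is that every resulting term carries a factor of $w$ or $\nabla w$, so the whole right-hand side is controlled by a product involving $\|w\|$, $\|\nabla w\|$, and norms of the solutions; the linear $-w$ part of $\varphi$ contributes only a harmless $\|\nabla w\|^2$ term. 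I would then estimate using Hölder and the interpolation $\|\nabla w\|_{L^2}^2 \le \|w\|_{L^2}\|\Delta w\|_{L^2}$ already used in Lemma~\ref{lem:galerkin}, absorbing the top-order $\|\Delta w\|_{L^2}^2$ into the dissipative term $\nu\|\Delta w\|_{L^2}^2$ on the left via Young's inequality, to arrive at a differential inequality of the form
\begin{align*}
\frac{d}{dt} \|w\|_{L^2}^2 \le \beta(t) \, \|w\|_{L^2}^2,
\end{align*}
where $\beta \in L^1(0,T)$. Since $w(0) = 0$, Grönwall's inequality then forces $w \equiv 0$, giving $y_1 = y_2$.

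The main obstacle is ensuring $\beta \in L^1(0,T)$, and this is exactly where the asymmetric regularity hypothesis $y_2 \in L^4(0,T;W^{1,d})$ is needed. The troublesome term is the one where the gradient falls on the smooth-side solution, namely $\langle (\cdots) w \nabla y_2, \nabla w\rangle$ after factoring; here I would place $\nabla y_2$ in $L^d$, use Hölder with the Gagliardo-Nirenberg-Sobolev embeddings available in dimension $d \le 3$ to distribute the remaining factors of $w$ and $\nabla w$ into the $L^2$- and $H^2$-scales, and check that the time-integrability exponents close up so that the coefficient multiplying $\|w\|_{L^2}^2$ (after absorption) lies in $L^1$ in time. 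The exponent $4$ in $y_2 \in L^4(0,T;W^{1,d})$ should combine with the $L^2(0,T;H_n^2)$ regularity of the solutions and the interpolation inequalities to yield precisely integrable coefficients. Care is needed to split the cubic and lower-order contributions correctly and to verify that only $y_2$ (and not $y_1$) requires the extra spatial regularity, which is the hallmark of a weak-strong rather than strong-strong uniqueness statement.
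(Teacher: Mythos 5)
Your proposal follows essentially the same route as the paper: an energy estimate for the difference $w=y_1-y_2$, testing with $w$, dropping $\langle\mathcal F w,w\rangle\ge 0$, exploiting the cubic structure of $\varphi$ together with the asymmetric regularity $y_2\in L^4(0,T;W^{1,d})$ and the interpolation $\|\nabla w\|_{L^2}^2\le\|w\|_{L^2}\|\Delta w\|_{L^2}$ to absorb $\nu\|\Delta w\|_{L^2}^2$ and obtain a Grönwall coefficient in $L^1(0,T)$. The only detail worth making explicit is that in your factorisation $\nabla\bigl[(y_1^2+y_1y_2+y_2^2-1)w\bigr]$ the bare $\nabla y_1$ terms must be eliminated by writing $\nabla y_1=\nabla y_2+\nabla w$ (equivalently, using $-\nabla(\varphi(y_1)-\varphi(y_2))\cdot\nabla w=(\varphi'(y_2)-\varphi'(y_1))\nabla y_2\cdot\nabla w-\varphi'(y_1)|\nabla w|^2$), so that only $\nabla y_2$ and $\nabla w$ appear --- which is exactly how the paper arrives at its bound involving $\|\nabla y_2\|_{L^3}$ and $\|\nabla z\|_{L^2}$ alone.
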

\begin{proof}
We start with the case $d=3$. 
The function $z=y_1-y_2$ solves
\begin{align*}
\langle \partial_t z,v\rangle + \nu \langle \Delta z, \Delta v\rangle + \langle \mathcal{F} z, v\rangle &= \langle \varphi_2(z),\Delta v\rangle  
\end{align*}
for all $v \in H_n^2(\Omega)$ and a.a.~$t \ge 0$ with $\varphi_2(z) = \varphi(y_1) - \varphi(y_2)$. With similar arguments as used in the proof of Lemma~\ref{lem:galerkin}, one can see that 
\begin{align}
\frac{d}{dt} \frac{1}{2} \|z\|^2_{L^2} &+ \nu \|\Delta z\|_{L^2}^2 + \langle \mathcal{F} z,z\rangle  
\le C (\|y_1\|_{L^6} +\|y_2\|_{L^6}) \|\nabla y_2\|_{L^3} \|z\|_{L^6} \|\nabla z\|_{L^2} + \|\nabla z\|^2_{L^2} \notag \\
&\le C' (1 + \|y_1\|_{H^1}^2 + \|y_2\|_{H^1}^2) \|\nabla y_2\|^2_{L^3} \|z\|_{L^2}^2 + \nu \|\Delta z\|_{L^2}^2, \label{eq:unique1}
\end{align}
where we tacitly employed the additional regularity of the time derivatives. 
For the second step, we further used that $\|y\|_{L^6} \le C \|y\|_{H^1}$, $\|\nabla z\|_{L^2} \le \|z\|_{L^2}^{1/2} \|\Delta z\|_{L^2}^{1/2}$, and Young inequalities. From the regularity of weak solutions, we can see that $y_1$, $y_2 \in L^4(H^1)$ and further $\nabla y_2 \in L^4(L^3)$. This shows that  $$\alpha(t) :=(1 + \|y_1\|_{H^1}^2 + \|y_2\|_{H^1}^2) \|\nabla y_2\|^2_{L^3}$$ is integrable in time. We may then absorb the term $\nu \|\Delta z\|^2_{L^2}$ in the left-hand side of the previous estimate \eqref{eq:unique1} and apply Grönwall's inequality to see that $$\|z(t)\|_{L^2}^2 \le e^{\int_0^t \alpha(s) ds} \|z(0)\|_{L^2}^2 = 0,$$ by assumption on the initial values.
In dimension $d \le 2$, we can use slightly different estimates to obtain the same bound with $\alpha(t) = (1 + \|y_1\|_{H^1}^2 + \|y_2\|_{H^1}^2) \|\nabla y_2\|^2_{L^2}$ instead. 
\end{proof}

\begin{remark}
In dimension $d \le 2$, any pair of weak solutions to \eqref{eq:ch1}--\eqref{eq:ch2} satisfies the conditions of Theorem~\ref{thm:unique}. 
To see this, we invoke the Gagliardo-Nirenberg inequality, which yields $\|y\|_{L^6} \leq  \|y\|^{2/3}_{L^2} \|y\|^{1/3}_{H^2}$, and hence 
$$
\langle \varphi(y),\Delta v\rangle \le C (1+\|y\|_{L^6}^3) \|\Delta v\|_{L^2} 
\le C' (1+ \|y\|_{L^2}^4) \|y\|_{H^2}^2.
$$ 
This suffices to show that $\partial_t y \in L^2(0,T;H_n^{-2})$. Further, the condition $y \in L^4(0,T;H^1)$ is valid for any weak solution $y \in L^2(0,T;H_n^2) \cap L^\infty(0,T;L^2)$. 
In dimension $d=3$, the regularity conditions of Theorem~\ref{thm:unique} can be established if $\mathcal{F}$ is bounded as a mapping from $L^2$ to $L^2$, and the domain $\Omega$ has a sufficiently regular boundary. In this case, the required regularity of the solution can be obtained by testing \eqref{eq:ch1}--\eqref{eq:ch2} with $v=\Delta y$. 
We refer to \cite{Boyer1999} 
for some typical results in this direction. 
\end{remark}

%%%%%%%%%%%%%%%%%%%%%%%%%%%%%%%%%%%%%%%%%%%%%%%%%%%%%%%%%%%%%%%%%%%%%%%%%%%%%%%%%
\section{Construction of feedback operators}
\label{sec:feedback}
%%%%%%%%%%%%%%%%%%%%%%%%%%%%%%%%%%%%%%%%%%%%%%%%%%%%%%%%%%%%%%%%%%%%%%%%%%%%%%%%%

A possible choice of a feedback operator that satisfies \eqref{eq:spectral} for given $\gamma>0$ would be $\mathcal{F} u = \lambda u$ with $\lambda$ sufficiently large. However, such a construction is impracticable from an application point of view. 
Using similar arguments as in \cite{AzouaniTiti2014}, we now show that finitely many measurements and actuators are sufficient to satisfy \eqref{eq:spectral}. 
Alternative constructions can be found in \cite{KunRodWal21,KunRodWal24-cocv}. 
In the following, we consider feedback operators of the form
\begin{align} \label{eq:feedback}
\mathcal{F} z\coloneqq\lambda \sum\nolimits_T |T| \, \langle \Phi_T,z\rangle\Phi_T
\end{align}
where $\mathcal{T}_H = \{T\}$ is a partition of the domain $\Omega$ into quasi-uniform subdomains of size~$H$. Here, $|T|=\int_T 1 \, dx$ denotes the volume of $T$ and $\Phi_T$ represent scaled characteristic functions or delta distributions supported in $T$. 
By quasi-uniformity, we mean that each $T$ contains a ball of size $c H$ and is contained in a ball of size $H$ with uniform constant $c>0$.
Note that $\mathcal{F} z$ only depends on the finite-dimensional measurements $\langle \Phi_T,z\rangle$, so $\mathcal{F}$ defines a static output feedback control operator.

%%%%%%%%%%%%%%%%%%%%%%%%%%%%%%%
\subsection{Pointwise control}
We choose $\Phi_T =\delta_{x_T}$ as delta distribution supported in $x_T \in T$. Then the action of the feedback operator can be written as 
\begin{align} \label{eq:pointwise}
\langle \mathcal{F} z, v \rangle 
= \lambda \sum\nolimits_T |T| \, z(x_T) \, v(x_T),
\end{align}
which can be interpreted as a numerical approximation of the integral $\langle z,v\rangle_{L^2}$. 
Let us denote by $(I_H^0 z)(x) = \sum_T z(x_T) \chi_T(x)$ the piecewise constant interpolant, which is well-defined for functions $z \in H_n^2$. 
We may then rewrite \eqref{eq:pointwise} as
\begin{align*}
\langle \mathcal{F} z, v\rangle 
=\lambda \langle I_H^0 z, I_H^0 v\rangle_{L^2(\Omega)}.
\end{align*}
From standard approximation error estimates \cite{BrennerScott2008}, 
one can deduce that 
\begin{align*}
\|z-I_H^0 z\|_{L^2} 
\le C' H (\|z\|_{L^2} + \|\Delta z \|_{L^2}) \qquad \forall z \in H_n^2(\Omega). 
\end{align*}
%(This estimate is somewhat sub-optimal, concerning approximation orders, since we need regularity to make interpolation well-defined.)
%
Using triangle inequalities and elementary manipulations, we can then further estimate 
\begin{align*}
\frac{1}{2}\|z\|_{L^2}^2 
& \le \|I_H^0 z\|^2_{L^2} + \|z - I_H^0 z\|_{L^2}^2 
\le \|I_H^0 z\|^2_{L^2} + C'' H^2 (\|z\|^2_{L^2} + \|\Delta z\|^2_{L^2}). 
\end{align*}
By assuming $C'' H^2 \le 1/4$, which is not really restrictive, we may absorb the second term on the left-hand side. After multiplying with $2 \lambda$, we thus obtain 
\begin{align*}
\frac{\lambda}{2} \|z\|_{L^2}^2 
&\le 2 \langle \mathcal{F} z, z\rangle + 2 C'' \lambda H^2 \|\Delta z\|^2_{L^2} \qquad \forall z \in H_n^2(\Omega). 
\end{align*}
We can then choose $\lambda$ sufficiently large and $H$ sufficiently small, such that $\lambda \ge 2 (C^* + \gamma)$
and additionally $2 C'' \lambda H^2 \le \frac{3 \nu}{2}$,%\noteC{cf.margin note in~\eqref{eq:spectral}} 
which yields the spectral inequality \eqref{eq:spectral} as desired. 

\subsection{Further constructions and monotonicity}
Instead of \eqref{eq:pointwise}, we may also consider a weighted $\langle \mathcal{F} z, v \rangle = \sum_T |T| z(x_T) v(x_T) \beta_T$ with $0 < \underline \beta \le w_T \le \overline \beta$ uniformly bounded. One may also consider nonlocal interaction $\langle \mathcal{F} z,v\rangle = \sum_T \sum_{T'} |T| |T'| z(x_T) v(x_{T'}) \beta_{T,T'}$, if the matrix generated by the weights $\beta_{T,T'}$ is symmetric with uniformly positive and bounded eigenvalues. 
Similar estimates can be applied if we choose the actuators as indicator functions~$\Phi_T=\frac{1}{|\omega_T|}\chi_{\omega_T}$, where $\chi_{\omega_T}$ are the indicator functions for appropriate domains $\omega_T \subset T$. In this case, the action of the feedback operator reads
\begin{align*}
\langle \mathcal{F} z,v\rangle = \lambda \sum\nolimits_T |T| \, \bar z(x_T) \, \bar v(x_T),
\end{align*}
where $\bar z(x_T)=\frac{1}{|\omega_T|} \int_{\omega_T} z(x)\,{\rm d} x$ is a local average of the function $z$.
We may then introduce the quasi-interpolation operator $\bar I_H^0 z = \sum_T \frac{1}{|\omega_T|} \, \bar z(x_T) \, \chi_{\omega_T}$, which has approximation properties similar to the nodal interpolation $I_h^0$ studied above. 
The spectral inequality \eqref{eq:spectral} can thus again be established by similar reasoning as before. 
Note that here $\mathcal{F} : L^2 \to L^2$ is continuous, which allows us to sharpen the previous estimates to some extent. 
For further constructions, which can be analyzed in a similar manner, we refer to \cite{KunRodWal21,KunRodWal24-cocv}.

\medskip 

Before concluding this section, let us  highlight some basic monotonicity properties of the feedback operators constructed above, which follow by elementary arguments. 
\begin{remark}[Monotonicity] \label{rem:monotonicity}
Let $\mathcal{F}_* : H_n^2 \to H_n^{-2}$ be a positive semi-definite feedback operator, as required in our analysis and the examples above, and let $\lambda_*$ be sufficiently large such that \eqref{eq:spectral} holds. 
Then \eqref{eq:spectral} remains valid for all $\lambda \ge \lambda_*$ and all feedback operators $\mathcal{F} \ge \mathcal{F}_*$., i.e.,  which satisfy
$\langle \mathcal{F} z,z\rangle \ge \langle \mathcal{F}_* z,z\rangle$ for all $z \in H_n^2$.
\end{remark}
\noindent
These observations are useful for interpreting the numerical results later on.

%%%%%%%%%%%%%%%%%%%%%%%%%%%%%%%%%%%%%%%%%%%%%%%%%%%%%%%%%%%%%%%%%%%%%%%%%%%%%%%%%
\section{Discretisation}
\label{sec:disc}
%%%%%%%%%%%%%%%%%%%%%%%%%%%%%%%%%%%%%%%%%%%%%%%%%%%%%%%%%%%%%%%%%%%%%%%%%%%%%%%%%

The proof of Theorem~\ref{thm:1} presented above mainly relies on energy estimates and the spectral inequality \eqref{eq:spectral}, which both carry over verbatim to Galerkin approximations in space; see Section~\ref{sec:proof}. Together with an appropriate time discretisation strategy, one can thus obtain fully discrete schemes that inherit exponential stabilizability towards target trajectories.   
We state and prove our main result for a general class of discretisation schemes and later present one particular example used in our numerical tests.

\subsection{Discretisation scheme}
Let $V_h \subset H_n^2(\Omega)$ be finite-dimensional and $\tau>0$ a given time step size. We set $t^n = n \tau$ for $n \ge 0$ and abbreviate by $d_\tau a = \frac{1}{\tau} (a^n - a^{n-1})$ the backward difference quotient. 
The discrete solution $(y_h^n)_{n \ge 0} \subset V_h$ is defined by 
\begin{align} \label{eq:ch1h}
\langle d_\tau y_h^n, v_h\rangle + \langle \nu \Delta y_h^n, \Delta v_h\rangle &= \langle \varphi(y_h^n), \Delta v_h\rangle +\langle h^n,v_h \rangle - \langle \mathcal{F}(y_h^n- y_{r,h}^n),v_h\rangle
\end{align}
for all $v_h \in V_h$ and $t^n>0$ with 
initial condition
\begin{align} \label{eq:ch2h}
\langle y_h^0,v_h\rangle &= \langle y_0,v_h\rangle \qquad \forall v_h \in V_h.
\end{align}
Furthermore, $h^n = h(t^n)$ is the evaluation at time $t^n$ and $\{y_{r,h}^n\} \subset V_h$ is a given discrete reference trajectory, e.g. obtained by interpolation of $y_r$.  
This scheme amounts to a discretisation of \eqref{eq:ch1}--\eqref{eq:ch2} by Galerkin approximation in space and the implicit Euler method in time. The feasibility of the method follows by standard arguments. 
\begin{lemma}
Let Assumption~\ref{ass:1} hold, $V_h \subset H_n^2$ be a finite-dimensional sub-space, and $\tau >0$. Then the scheme \eqref{eq:ch1h}--\eqref{eq:ch2h} admits at least one solution.    
\end{lemma}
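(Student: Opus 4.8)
\emph{Strategy.} The plan is to solve \eqref{eq:ch1h} sequentially in time and, at each level, to reduce the existence of $y_h^n$ to finding a zero of a continuous map on the finite-dimensional space $V_h$, which I would obtain from a standard consequence of Brouwer's fixed-point theorem. The initial value $y_h^0$ is determined by the linear system \eqref{eq:ch2h}, whose matrix is the symmetric positive definite mass matrix of $V_h$, and thus exists and is unique. Assume now that $y_h^0,\dots,y_h^{n-1}$ have been constructed. Writing $u=y_h^n$ and using linearity of $\mathcal F$, equation \eqref{eq:ch1h} at level $n$ is equivalent to finding $u\in V_h$ with
\begin{align*}
\tfrac1\tau\langle u,v_h\rangle + \nu\langle\Delta u,\Delta v_h\rangle + \langle\mathcal F u,v_h\rangle - \langle\varphi(u),\Delta v_h\rangle = \langle F^n,v_h\rangle \qquad \forall v_h\in V_h,
\end{align*}
where $\langle F^n,v_h\rangle = \tfrac1\tau\langle y_h^{n-1},v_h\rangle + \langle h^n,v_h\rangle + \langle\mathcal F y_{r,h}^n,v_h\rangle$ collects the quantities known from the previous step. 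I would then define $P\colon V_h\to V_h$ through the $L^2$-inner product on $V_h$ by setting $\langle P(u),v_h\rangle$ equal to the left-hand side minus $\langle F^n,v_h\rangle$, so that solutions of the scheme are exactly the zeros of $P$.

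\emph{Continuity and the sign condition.} Continuity of $P$ is immediate, since every term is a fixed polynomial expression in $u$ and all forms are continuous on the finite-dimensional space $V_h$. The essential step is the coercivity estimate $\langle P(u),u\rangle\ge 0$ for all $u$ on a sphere $\|u\|_{L^2}=\rho$ of sufficiently large radius. Testing with $v_h=u$ and treating the nonlinearity as in the proof of Lemma~\ref{lem:galerkin}, I would use
\begin{align*}
-\langle\varphi(u),\Delta u\rangle = \langle\nabla\varphi(u),\nabla u\rangle = \int_\Omega(3u^2-1)\,|\nabla u|^2\,{\rm d}x = 3\!\int_\Omega u^2|\nabla u|^2\,{\rm d}x - \|\nabla u\|_{L^2}^2,
\end{align*}
so that, together with $\langle\mathcal F u,u\rangle\ge0$ and $\langle F^n,u\rangle\le c_n\|u\|_{L^2}$ with a constant $c_n$ depending on the data and on $V_h$, one arrives at
\begin{align*}
\langle P(u),u\rangle \ge \tfrac1\tau\|u\|_{L^2}^2 + \nu\|\Delta u\|_{L^2}^2 + 3\!\int_\Omega u^2|\nabla u|^2\,{\rm d}x - \|\nabla u\|_{L^2}^2 - c_n\|u\|_{L^2}.
\end{align*}

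\emph{Conclusion and main obstacle.} The remaining task is to show that the right-hand side is non-negative on a large enough sphere. For moderate time steps this is straightforward: the interpolation inequality $\|\nabla u\|_{L^2}^2\le\|u\|_{L^2}\|\Delta u\|_{L^2}$ and Young's inequality absorb $\|\nabla u\|_{L^2}^2$ into $\nu\|\Delta u\|_{L^2}^2+\tfrac1\tau\|u\|_{L^2}^2$. The main obstacle is that, for large $\tau$, this absorption alone is insufficient and one must retain the dissipative quartic contribution $3\int_\Omega u^2|\nabla u|^2$. Here I would exploit that $V_h$ is fixed and finite-dimensional: the functional $u\mapsto\int_\Omega u^2|\nabla u|^2$ is continuous and vanishes on the unit sphere of $V_h$ only at constants, for which $\|\nabla u\|_{L^2}=0$; a compactness argument on the unit sphere then shows that the coercive terms dominate $\|\nabla u\|_{L^2}^2$ once $\rho$ is large, with a threshold depending on $V_h$, $\nu$ and $\tau$ but not required to be uniform. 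With the sign condition established, the standard corollary of Brouwer's theorem (see, e.g., \cite{roubicek}) yields a zero of $P$ in the ball of radius $\rho$, i.e.\ a solution $y_h^n$, and proceeding inductively in $n$ produces a full discrete trajectory. Note that only existence is asserted; uniqueness need not hold for large $\tau$, consistent with the non-monotone nonlinearity, and the argument imposes no step-size restriction.
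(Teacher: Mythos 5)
Your proposal is correct and follows essentially the same route as the paper: the initial datum via the (positive definite) mass matrix, then at each time level a reduction to a zero of a continuous map $P:V_h\to V_h$ with $\langle P(u),u\rangle\ge 0$ on a sufficiently large sphere, concluded by the Brouwer-type lemma on zeros of vector fields and induction in $n$. You in fact supply more detail than the paper does on the sign condition --- in particular the compactness argument on the unit sphere of $V_h$ needed to handle arbitrary $\tau>0$, where the paper only asserts that the estimate follows ``from the properties of $\varphi$'' --- and that added detail is sound.
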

\begin{proof}
The initial condition \eqref{eq:ch2h} amounts to an elliptic variational problem which, according to the Lax-Milgram lemma, has a unique solution $y_h^0 \in V_h$. 
Now, let $y_h^{n-1}$, $n \ge 1$ be given. Then \eqref{eq:ch1h} can be restated as a nonlinear system $G^n(\bar y^n) = 0$ in $\mathbb{R}^N$ with $\bar y^n \in \mathbb{R}^N$ denoting the coordinate vector representing $y_h \in V_h$ in some basis of $V_h$. 
From the properties of the nonlinear function $\varphi(y)$, one can deduce that 
\begin{align*}
\langle G^n(\bar y^n), \bar y^n \rangle \ge 0 \qquad \text{for all } |\bar y^n| \ge R
\end{align*}
with $R$ sufficiently large. From the lemma on the zeros of vector fields \cite[Chapter 9.1]{Evans}, we thus deduce that there exists $\bar y^n \in \mathbb{R}^N$ with $|\bar y^n| \le R$ such that $G^n(\bar y^n) = 0$. 
\end{proof}

\begin{remark}
The uniqueness of the discrete solution can be shown under certain restrictions on the time step $\tau \leq \tau_0(h)$. Since all further results are valid for every discrete solution, we do not go into details here, but refer to \cite{brunk2023stability} for results in this direction. 
\end{remark}

\subsection{Discrete exponential stability}
We now study the stabilisation of discrete solutions. 
To do so, we consider the discrete target trajectory as a solution of 
\begin{align} \label{eq:ch1hr}
\langle d_\tau y_{r,h}^n, v_h\rangle + \langle \nu \Delta y_{r,h}^n, \Delta v_h\rangle -  \langle \varphi(y_{r,h}^n), \Delta v_h\rangle &= \langle h_r^n,v_h \rangle
\end{align}
with appropriate right hand side $h_r^n \in L^2(\Omega)$. 
Typical choices for $y_{r,h}^n$ we have in mind are steady states of the uncontrolled discrete system or discrete trajectories emerging from different initial values. In this case $h_r^n=h^n$ for all $n \ge 1$. 
With almost verbatim arguments as used for the proofs of Section~\ref{sec:proof}, we can then derive the following result.
\begin{theorem}
Let $\{y_{r,h}^n\} \subset V_h$ be a given target trajectory satisfying \eqref{eq:ch1hr} for all $n \ge 1$ and $\|y_{r,h}^n\|_{W^{1,\infty}} \le R$.
Further, let Assumption~\ref{ass:1} and let condition \eqref{eq:spectral} hold with some $\gamma > 0$. 
Then any solution $\{y_h^n\} \subset V_h$ of \eqref{eq:ch1h} satisfies
\begin{align*}
\|y_h^n - y_{r,h}^n\|_{L^2}^2 \leq e^{-\tilde \gamma t^n} \|y_h^0 - y_{r,h}^0\|_{L^2}^2 + \tau \sum\nolimits_{k=0}^n \tau e^{-\tilde \gamma (t^n - t^k)} \|h^k - h_r^k\|_{L^2}^2,
\end{align*}
for all $n \geq 0$ with $\tilde \gamma$ defined by 
$\tau \tilde \gamma = \log(1+\tau \gamma)$. 
The solution of the controlled discrete system thus stabilises exponentially around the discrete target trajectory. 
%, uniformly in the discretisation parameters. 
\end{theorem}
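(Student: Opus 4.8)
The plan is to reproduce the energy argument of Lemma~\ref{lem:galerkin} at the fully discrete level, the only genuinely new ingredient being the discrete-in-time counterpart of the $\frac{d}{dt}\frac12\|\cdot\|^2$ term. First I would introduce the discrete difference $z_h^n = y_h^n - y_{r,h}^n$ and subtract the target equation \eqref{eq:ch1hr} from the state equation \eqref{eq:ch1h}. Because the discretisation is conforming, $V_h \subset H_n^2$, and both equations are tested against the \emph{same} functions $v_h \in V_h$, this yields the discrete variational identity
\begin{align*}
\langle d_\tau z_h^n, v_h\rangle + \nu\langle \Delta z_h^n, \Delta v_h\rangle + \langle \mathcal{F} z_h^n, v_h\rangle = \langle g_r^n, v_h\rangle + \langle \varphi_r^n, \Delta v_h\rangle,
\end{align*}
with $g_r^n = h^n - h_r^n$ and $\varphi_r^n = \varphi(y_h^n) - \varphi(y_{r,h}^n)$, which is the exact discrete analogue of \eqref{eq:var}.

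The second step is to test with $v_h = z_h^n$. Here the implicit (backward) Euler choice is essential: the nonlinear, diffusive, and feedback terms are all evaluated at the new level $n$, so they can be absorbed via the spectral inequality at that same level. For the time-difference term I would use the elementary identity $\langle a-b,a\rangle = \tfrac12(\|a\|_{L^2}^2 - \|b\|_{L^2}^2 + \|a-b\|_{L^2}^2)$, which gives
\begin{align*}
\langle d_\tau z_h^n, z_h^n\rangle \ge \tfrac{1}{2\tau}\bigl(\|z_h^n\|_{L^2}^2 - \|z_h^{n-1}\|_{L^2}^2\bigr).
\end{align*}
The nonlinear term is handled exactly as in \eqref{Eq:EstimatePhi} and the lines following it: integration by parts against the Neumann condition turns $\langle\varphi_r^n,\Delta z_h^n\rangle$ into $-\langle\nabla\varphi_r^n,\nabla z_h^n\rangle$, and the pointwise Young and Hölder estimates, together with $\|\nabla z\|_{L^2}^2 \le \|z\|_{L^2}\|\Delta z\|_{L^2}$ for $z\in H_n^2$ and the bound $\|y_{r,h}^n\|_{W^{1,\infty}}\le R$, reproduce \emph{verbatim} the same constant $C^*$. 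This is precisely the step where the assumed $W^{1,\infty}$ bound on the discrete reference enters. Collecting terms, and estimating $\langle g_r^n,z_h^n\rangle$ by Young's inequality, I would arrive at the discrete analogue of \eqref{eq:spectral-Gal},
\begin{align*}
\tfrac{1}{2\tau}\bigl(\|z_h^n\|_{L^2}^2 - \|z_h^{n-1}\|_{L^2}^2\bigr) + \tfrac{\nu}{2}\|\Delta z_h^n\|_{L^2}^2 + \langle\mathcal{F} z_h^n, z_h^n\rangle \le \tfrac12 C^*\|z_h^n\|_{L^2}^2 + \tfrac12\|g_r^n\|_{L^2}^2.
\end{align*}
Applying the spectral condition \eqref{eq:spectral} to the two middle terms cancels the $C^*\|z_h^n\|_{L^2}^2$ on the right and leaves, after multiplying by $2\tau$, the recursion $(1+\tau\gamma)\|z_h^n\|_{L^2}^2 \le \|z_h^{n-1}\|_{L^2}^2 + \tau\|g_r^n\|_{L^2}^2$.

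The final step, and the only place requiring genuine care, is converting this geometric recursion into the stated exponential estimate. Writing $\rho = (1+\tau\gamma)^{-1}$ and iterating gives $\|z_h^n\|_{L^2}^2 \le \rho^n\|z_h^0\|_{L^2}^2 + \sum_{k=1}^n \rho^{\,n-k+1}\tau\|g_r^k\|_{L^2}^2$; the definition $\tau\tilde\gamma = \log(1+\tau\gamma)$ then yields $\rho^n = e^{-\tilde\gamma\,n\tau} = e^{-\tilde\gamma t^n}$, and bounding $e^{-\tilde\gamma(t^n-t^{k-1})} \le e^{-\tilde\gamma(t^n-t^k)}$ brings the forcing sum into the claimed form. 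I expect no conceptual obstacle, since the spatial and nonlinear estimates transfer verbatim by conformity $V_h\subset H_n^2$; the main point to get right is the discrete Grönwall bookkeeping and the implicit definition of the perturbed decay rate $\tilde\gamma$, which guarantees exponential stabilisation uniformly in the discretisation parameters $h$ and $\tau$.
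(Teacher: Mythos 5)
Your argument is correct and follows essentially the same route as the paper: subtract the discrete target equation, test with $z_h^n=y_h^n-y_{r,h}^n$, bound the time-difference term via $\langle d_\tau z_h^n,z_h^n\rangle\ge d_\tau\frac12\|z_h^n\|_{L^2}^2$, reuse the continuous-level estimate of the cubic term (which is where $\|y_{r,h}^n\|_{W^{1,\infty}}\le R$ enters), invoke \eqref{eq:spectral}, and unroll the resulting recursion $(1+\tau\gamma)\|z_h^n\|_{L^2}^2\le\|z_h^{n-1}\|_{L^2}^2+\tau\|g_{r,h}^n\|_{L^2}^2$ using $\tau\tilde\gamma=\log(1+\tau\gamma)$. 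The only differences are cosmetic (your polarization identity versus the paper's stated inequality for $d_\tau$, and your geometric-sum bookkeeping, which in fact matches the recursion more cleanly than the doubled $\tau$ appearing in the theorem's displayed sum).
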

\begin{proof}
Let $z_h^n := y_h^n - y_{r,h}^n$ denote the difference to the target values. Then 
\begin{align*} 
\langle d_\tau z_h^n, v_h\rangle + \langle \nu \Delta z_h^n, \Delta v_h\rangle &= \langle \varphi_{r,h}(z_h^n), \Delta v_h\rangle +\langle g_{r,h}^n,v_h \rangle
\end{align*}
with $\varphi_{r,h}(z) = \varphi(y_h) - \varphi(y_{r,h})$ and $g_{r,h}^n = h^n - h_r^n$. By testing this discrete variational identity with $v_h=z_h^n$ and using that $d_\tau \frac{1}{2} \|z_h^n\|_{L^2}^2 \le \langle d_\tau z_h^n,z_h^n \rangle$, we can see that 
\begin{align} \label{eq:est1}
d_\tau \frac{1}{2} \|z_h^n\|^2_{L^2} + \nu \|\Delta z_h^n\|^2_{L^2} 
+ \langle \mathcal{F} z_h^n, z_h^n\rangle 
\le \langle g_{r,h}^n,z_h^n\rangle - \langle \nabla \varphi_{r,h}(z_h^n),\nabla z_h^n\rangle. 
\end{align}
With the very same reasoning as in employed in Section~\ref{sec:proof}, we obtain the estimate
\begin{align*} % \label{eq:est2}
-\langle \nabla \varphi_{r,h}({z_h^n}),  \nabla z_h^n \rangle
&\le \frac{3}{4}( R^2 + (3 R^2)^{4/3} \nu^{-1/3}+{\nu^{-1}}) \|z_h^n\|_{L^2}^2 + \frac{\nu}{{2}} \|\Delta z_h^n\|^2_{L^2}.
\end{align*}
The last term can be absorbed into the left hand side of 
\eqref{eq:est1}, and the remaining term in this estimate can be bounded by Young's inequality. In summary, this leads to
\begin{align*}
d_\tau \|z_h^n\|^2_{L^2} + \nu \|\Delta z_h^n\|_{L^2}^2  + 2 \langle \mathcal{F} z_h^n,z_h^n\rangle 
\le \|g_{r,h}^n\|_{L^2}^2 + C^* \|z_h^n\|_{L^2}^2,
\end{align*}
where $C^*= \frac{3}{2}( R^2 + (3 R(R+1))^{4/3} \nu^{-1/3}+\nu^{-1}) + 1$. 
Using the spectral bound \eqref{eq:spectral}, the definition of the backward difference quotient $d_\tau$, and multiplying by $\tau$ then leads to 
\begin{align*}
\|z_h^n\|^2_{L^2} + \gamma \tau  \|z_h^n\|^2_{L^2} \le \|z_h^{n-1}\|^2_{L^2} + \tau \|g_{r,h}^n\|_{L^2}^2.  
\end{align*}
The claim of the theorem now follows by recursive application of this inequality and using that $\frac{1}{1+\tau \gamma} = e^{-\tilde \gamma \tau}$, which follows from the definition of $\tilde \gamma$.
\end{proof}

\begin{remark}\label{rem:gamma-tilgamma}
We emphasise that the discrete stability estimate is derived under essentially the same conditions as used on the continuous level. In particular, the same spectral estimate is employed in the stability proof. 
In addition, note that for $\tilde \gamma \to \gamma$ for $\tau \to 0$. Hence $\tilde \gamma$ is essentially independent of the time-step size.
Moreover, the estimates of the previous theorem are also independent of spatial discretisation. 
They would also hold for a semidiscrete approximation in time, in which case we may choose $y_{r,h} = y_r(t^n)$. Then $h_r^n = d_\tau y_r^n - \partial_t y_r(t^n)$ which converges to zero with $\tau \to 0$ if $y_r$ is sufficiently smooth in time. 
In summary, we thus expect to observe discretisation-independent stabilisation towards sufficiently regular discrete target trajectories.
\end{remark}

%%%%%%%%%%%%%%%%%%%%%%%%%%%%%%%%%%%%%%%%%%%%%%%%%%%%%%%%%%%%%%%%%%%%%%%%%%%%%%%%%
\section{Numerical validation}
\label{sec:num}
%%%%%%%%%%%%%%%%%%%%%%%%%%%%%%%%%%%%%%%%%%%%%%%%%%%%%%%%%%%%%%%%%%%%%%%%%%%%%%%%%
%
To illustrate our theoretical findings, we now present some numerical tests. For ease of presentation, we consider a two-dimensional domain $\Omega = (0,1)^2$. As discretisation of our model problem \eqref{eq:ch1}--\eqref{eq:ch2}, we consider the method \eqref{eq:ch1h} with the approximation space
\begin{align}
V_h = \{v_h \in H^2_n(\Omega) : v_h |_T \in P_5(T) \ \forall T \in \mathcal{T}_h,  \quad \partial_n v_h|_e \in P_3(e) \ \forall e \in \mathcal{E}_h \}.
\end{align}
Here, $\mathcal{T}_h = \{T\}$ is a quasi-uniform triangulation of the domain $\Omega$ and $\mathcal{E}_h = \{e\}$ denotes the set of edges. This is the finite element space made up of the \emph{Bell triangle}; see \cite{bell1969refined,Ciarlet2002} for details.
An implementation of the element is available in the finite element library \emph{Firedrake} \cite{rathgeber2016firedrake}. 
We consider pointwise feedback operators of the form 
\begin{align} \label{eq:feedback2}
\mathcal{F}_{\lambda,M} z = \frac{\lambda}{M^2} \sum_{j,k=1}^M z(\xi_j,\xi_k) \delta_{(\xi_j,\xi_k)}
\end{align}
with equidistant interpolation points $ \xi_i = (i-1/2) H$ and $H=1/M$; see Figure~\ref{fig:feedback}. This is exactly the setting discussed in Section~\ref{sec:feedback}. From our investigations, we know that for $\lambda$ and $M$ sufficiently large, the spectral condition \eqref{eq:spectral} can be satisfied for any $\gamma>0$. 
\begin{figure}[ht!]
	\centering
\includegraphics[width=.9\textwidth,page=1]{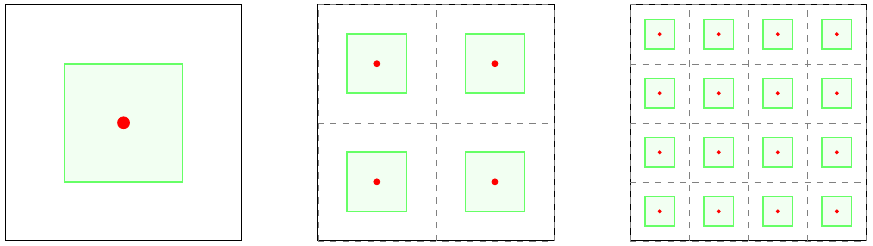}
\caption{Arrangement of feedback measurements and actuators for a domain $\Omega=(0,1)^2$ with $M=1,2,4$ subintervals per coordinate direction. Red dots correspond to pointwise actuators and measurements whereas green areas depict corresponding characteristic functions. \label{fig:feedback}}
\end{figure}

\subsection{Validation of spectral estimates}
\label{sec:num_spectral}
In a first series of tests, we want to verify the spectral estimate \eqref{eq:spectral} for the feedback operator \eqref{eq:feedback2} with parameters $\lambda$ and $M$ sufficiently large. 
Before going to the numerical tests, let us briefly comment on the expectations that one can have. Consider the self-adjoint elliptic eigenvalue problem
\begin{alignat*}{2}
\nu \Delta^2 v &= \alpha v \qquad && \text{in } \Omega \\
\partial_n v &= \partial_n \Delta v = 0 \qquad && \text{on } \partial\Omega.
\end{alignat*}
From the compactness of the resolvent of $\Delta^2$, we conclude that there exists a monotonically increasing sequence of eigenvalues $\alpha_n \ge 0$ that accumulates at infinity. By Fourier analysis, one can further see that $\alpha_n \approx \nu n^2$, since $\Omega$ is a two-dimensional domain. 
In order for \eqref{eq:spectral} to be valid with $\gamma \ge 0$, the feedback operator $\mathcal{F}$ must control at least the eigenmodes for $\nu \alpha_n \lesssim C^*$. To be able to satisfy this condition, one has to choose
\begin{align} \label{eq:estimate}
M^2 = n \gtrsim \sqrt{C^*/\nu}. 
\end{align}
By increasing $\lambda$ in \eqref{eq:feedback2}, we can further improve the strength of the feedback. 
For a given parameter $C^*$, depending on $R$ and $\nu$, we thus expect a certain trade-off between $M$ and $\lambda$ that allows us to guarantee the spectral estimate \eqref{eq:spectral} with $\gamma \ge 0$. 

\subsection*{Results.}
For given $(M,\lambda)$ we consider the generalized eigenvalue problem
\begin{align*}
%  \frac{3\nu}{2} (\Delta u,\Delta v)_{L^2}+2 \langle\mathcal{F}_{\lambda,M}u,v\rangle=\alpha(u,v)_{L^2},
  \nu (\Delta u,\Delta v)_{L^2}+2 \langle\mathcal{F}_{\lambda,M}u,v\rangle=\alpha(u,v)_{L^2},
\end{align*}
and compute the smallest eigenvalue $\alpha_{min}(M,\lambda)$ using SLEPc’s Krylov-Schur method.
In Figure~\ref{fig:spectral}, we illustrate the corresponding results by plotting the isolines of $\alpha_{\min}(M,\lambda)=C$ as a function of $M$ and $\lambda$ for different values of $C$.  
\begin{figure}[ht!]
	\centering
\includegraphics[width=.99\textwidth,page=2]{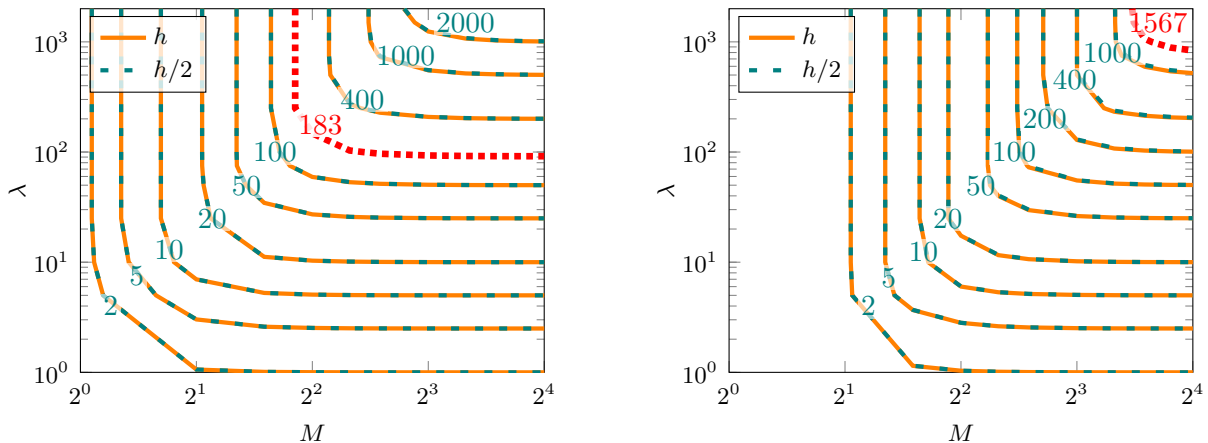}
\caption{Contour plots of the minimal eigenvalue $\alpha_{\min}(M, \lambda)$ for two spatial mesh sizes $h$ (solid lines) and $h/2$ (dashed lines), with parameter $\nu \in\{0.01,0.001\}$; the red dotted line indicates the theoretical threshold value $C^*$ for stabilisation where we chose $R=1$. \label{fig:spectral}} 
\end{figure}
As described in Remark~\ref{rem:monotonicity}, we observe monotonic dependence of $\alpha_{min}(M,\lambda)$ on $M$ and $\lambda$ when the value of $\nu>0$ is fixed. 

For $R=1$ and $\nu=0.01$, we obtain $C^*\approx 183$ in \eqref{eq:spectral}, while for $R=1$ and $\nu=0.001$, we get $C^*\approx 1567$. 
In Figure~\ref{fig:spectral}, we show the corresponding levels of $\alpha_{min}(M,\lambda)$ in red. As predicted by our preliminary considerations \eqref{eq:estimate} above, we see a moderate increase in the smallest number $M$ of actuators that allows to guarantee stability when decreasing $\nu$. At the same time, we observe $\lambda \approx 1/\nu$, which again seems clear from \eqref{eq:spectral}.  
The results depicted in Figure~\ref{fig:spectral} further demonstrate mesh-independence, which can be expected from the strong convergence estimates for computation of elliptic eigenvalue problems, and the high-order approximation provided by the Bell triangle~\cite{Babuska1989,Ciarlet2002}.

%%%%%%%%%%%%%%%%%%%%%%%%%%%%
\subsection{Stabilisation of the Cahn-Hilliard equation}
We now turn to our model problem \eqref{eq:ch1}--\eqref{eq:ch2} with feedback stabilisation.  
The two functions
\begin{align} \label{eq:num}
y_r(x,y) = 0
\qquad \text{and} \qquad 
y_0(x,y) = \tanh\Big(\frac{2x-1}{\sqrt{8\nu}}\Big).
\end{align}
correspond to an unstable equilibrium and an approximation of the stable equilibrium of the uncontrolled system \eqref{eq:ch1}--\eqref{eq:ch2} with $\mathcal{F}=0$ and $h=0$. 
To see this, simulations can be run for the uncontrolled system with initial values $y(0)=y_0 + \delta y$ and $y(0)=y_r + \delta y$, with $\delta y$ denoting some small random perturbation. 
In our numerical tests, we start from $y(0)=y_0$ and try to steer the system through feedback into the unstable equilibrium $y_r$. 
Here $h=h_r=0$, and from Theorem~\ref{thm:1}, we expect exponential convergence if the feedback $\mathcal{F}$ is sufficiently strong, such that \eqref{eq:spectral} holds with $\gamma>0$. This can be achieved by operators $\mathcal{F}_{\lambda,M}$ as investigated above. 
For our simulations, we set $\nu=0.01$ and choose $h=2^{-5}$, $\tau=10^{-3}$ for the discretisation parameters. 

\subsection*{Results.}
In Figure~\ref{Fig:Simulation}, we show some snapshots of a solution to \eqref{eq:ch1}--\eqref{eq:ch2} obtained for simulations of the controlled system with $\mathcal{F}=\mathcal{F}_{\lambda,M}$ for different choices of $M$ and $\lambda$. 
\begin{figure}[ht!]
	\centering
\includegraphics[width=.99\textwidth,page=3]{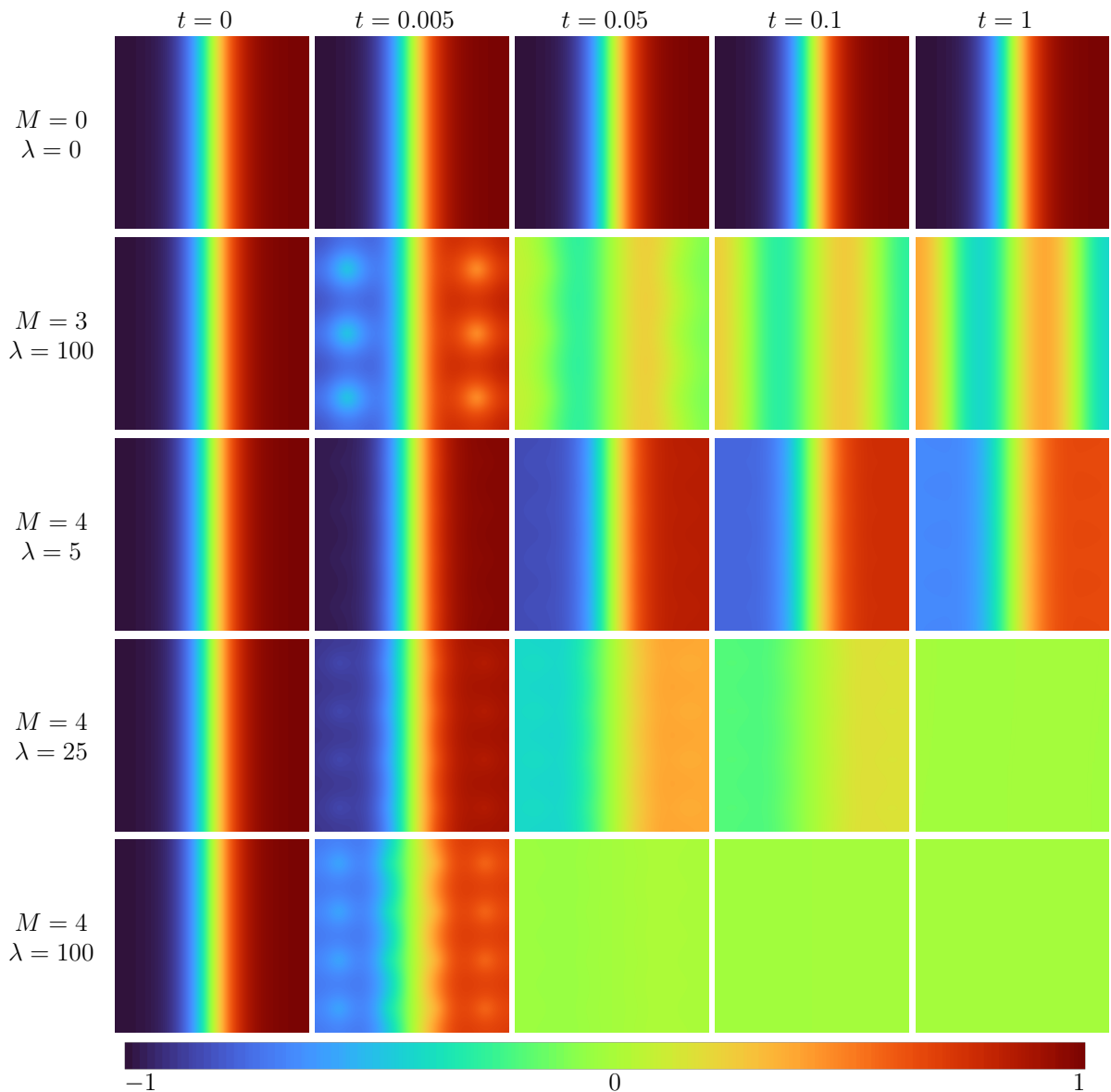}
 \caption{Snapshots of of solutions $y(t,x)$ to controlled system \eqref{eq:ch1}--\eqref{eq:ch2} with pointwise feedback $\mathcal{F}=\mathcal{F}_{M,\lambda}$ for time $t \in \{0,0.005,0.05,0.1,1\}$ (left to right) and different combinations of $M$ and $\lambda$ (top to bottom).
    \label{Fig:Simulation}}
\end{figure}
The first line depicts the evolution without feedback $(M = 0,\lambda = 0)$, confirming that $y_0$ is, in fact, very close to the stable equilibrium of the uncontrolled dynamics.
In the second line $(M=3,\lambda=100)$, the number of actuators $M^2=9$ is still too small, so that even a strong feedback with $\lambda=100$ is not sufficient to drive the system into the desired state. 
In the third line $(M=4,\lambda=5)$, the strength of the feedback $\lambda=5$ is too small, so a sufficiently high number of actuators $M^2=16$ is not sufficient.
The results of the fourth line $(M=4,\lambda=25)$ show the desired exponential convergence to the target solution, which here is an unstable equilibrium of \eqref{eq:ch1}--\eqref{eq:ch2}.
As depicted in the last line of Figure~\ref{Fig:Simulation}, the convergence can further accelerate by increasing the strength $\lambda=100$ of the feedback.
These observations are also in perfect agreement with the results depicted in Figure~\ref{fig:spectral}.

In Figure~\ref{fig:expdecay}, we show the evolution of distance $\log\|z(t)\|_{L^2}^2$ for different choices of feedback parameters $M$, $\lambda$ over time. 
\begin{figure}[ht!]
	\centering
\includegraphics[width=.99\textwidth,page=4]{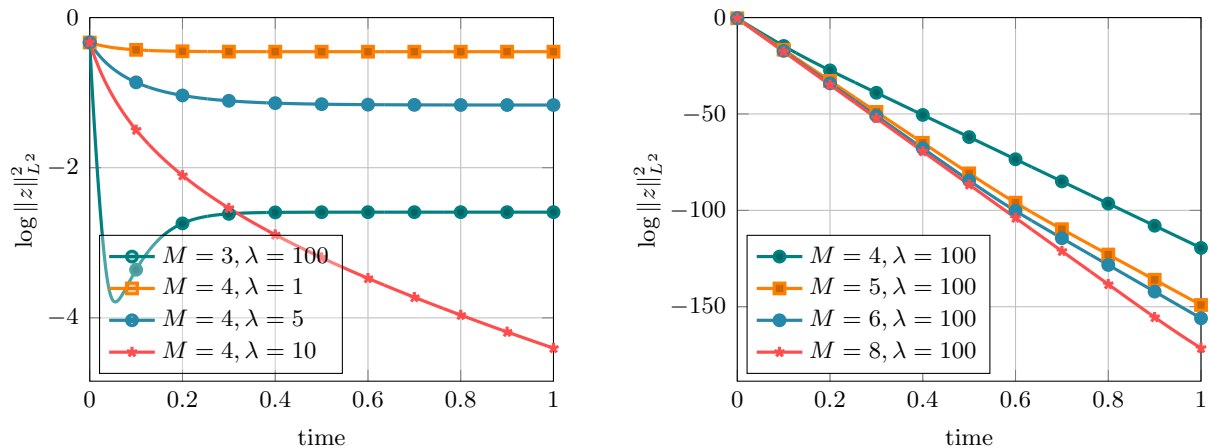}
    \caption{Evolution of distance $\log\|z(t)\|_{L^2}^2$ for various combinations of $M$ and $\lambda$; left plot: Non-convergence for too small $\lambda$ or $M$; right plot: exponential decay for $M$ and $\lambda$ sufficiently large.}
    \label{fig:expdecay}
\end{figure}
In the non-convergent cases (left plot), we observe stagnation of the pseudo-energies at some level, indicating a non-desired stable equilibrium of the controlled system. 
For sufficiently large values of $M$, $\lambda$ (right plot), on the other hand, we obtain the predicted exponential convergence, indicated by linear decay of the logarithmic distance measure.
As can be deduced from the results, 
sufficient large values of $M$ and $\lambda$ are necessary to obtain exponential stabilisation toward the target. Above the critical thresholds, the gain in further increasing the number of actuators is moderate, while further increasing the strength of the feedback results in faster decay. Again, these observations are again in perfect agreement with the spectral estimates investigated in Section~\ref{sec:num_spectral} and depicted in Figure~\ref{fig:spectral}.

%%%%%%%%%%%%%%%%%%%%%%%%%%%%%%%%%%%%%%%%%%%%%%%%%%%%%%%%%%%%%%%%%%%%%%%%%%%%%%%%%
\section*{Acknowledgements} 
This work was supported by the state of Upper Austria.
HE further acknolwedges supported by the Austrian Science Fund (FWF) via grant 10.55776/F90.

%%%%%%%%%%%%%%%%%%%%%%%%%%%%%%%%%%%%%%%%%%%%%%%%%%%%%%%%%%%%%%%%%%%%%%%%%%%%%%%%%
%\bibliographystyle{abbrv}
%\bibliography{biblio}

\end{document}